\newtheorem{thm}{Theorem}[section]
\newtheorem{cor}[thm]{Corollary}
\newtheorem{lem}[thm]{Lemma}
\newtheorem{prop}[thm]{Proposition}
\newtheorem{rem}[thm]{Remark}
\numberwithin{equation}{section}
\begin{document}

\title{Inverse problem for \text{P}ell equation and real quadratic fields of the least type}
\thanks{Supported by the National Research Foundation of Korea(NRF) grant funded by the Korea government(MEST) (2010-0026473).}

\author{ Park, Jeongho }

\address{Department of Mathematics (Room 117),
Pohang University of Science and Technology,
San 31 Hyoja Dong, Nam-Gu, Pohang 790-784, KOREA.
Tel. 82-10-3047-7793.}

\email{pkskng@postech.ac.kr}
\thanks{Supported by the National Research Foundation of Korea(NRF) grant funded by the Korea government(MEST)
(2010-0026473).}

\maketitle

\begin{abstract}
The purpose of this article is to give the solutions of the inverse problem for Pellian equations. For any rational number $0< a/b < 1$, the fundamental discriminants $D$ satisfying $(\lfloor \sqrt{D} \rfloor b + a)^2 - D b^2 = 4$ are given in terms of a quadratic progression. There were studies about this problem based on symmetric sequences $\{a_1,\cdots,a_{l-1}\}$ and periodic continued fractions $[a_0,\overline{ a_1,\cdots,a_{l-1},a_l }]$, but in this article we solve the problem in a completely different way with simpler parameters. The result is obtained by measuring the quality of approximation of a rational number to $\sqrt{d}$ or $\frac{1+\sqrt{d}}{2}$, and by defining a short interval attached to each rational number. On this formulation we also show that for almost all square-free integer $d$, $d$ is the least element of the prescribed quadratic progression for some $a/b$.
\keywords{Real quadratic field, Continued fraction, Pell equation, Symmetric sequence, Least type}
\subjclass{Primary:11J68, Secondary: 11R29, 11Y40}
\end{abstract}

\section{Introduction}\label{sec_intro}

Let $d$ be a non-square positive integer, and
\begin{gather*}
    D = \begin{cases}
d & \text{if $d \equiv 1$ mod $4$},\\
4d& \text{otherwise}
\end{cases}
\quad
\omega_d =\begin{cases}
\frac{1 + \sqrt{d}}{2} & \text{if $d \equiv 1$ mod $4$}\\
\sqrt{d} & \text{otherwise}.
\end{cases}
\end{gather*}
By Dirichlet's unit theorem, the set of positive solutions to the Pell's equation $X^2 - D Y^2 = 4$ forms a cyclic group generated by the fundamental unit $\varepsilon_d > 1$. Consider the continued fraction expansion $\omega_d = [a_0,a_1,a_2,\cdots]$. It is well known that the expansion of $\omega_d$ is of the form $\omega_d = [a_0,\overline{a_1,\cdots,a_{l-1}, a_l}]$ where $\{ a_1,\cdots,a_{l-1} \}$ is a symmetric sequence and $l = l(\omega_d)$ is the (minimal) period of the expansion  \cite{Halter_Koch}. By proposition~\ref{prop_partial quotients of omega d} $\varepsilon_d$ is of the form $\varepsilon_d = X + Y\omega_d$ or $X - Y + Y\omega_d$ where $X/Y = [a_0,a_1,\cdots,a_{l-1}]$. Here $a_0 = \lfloor \omega_d \rfloor$ is readily determined for any given $d$. Therefore the \emph{direct} problem, i.e., to find $\varepsilon_d$, is essentially the same as the following: given $d$, find corresponding symmetric sequence $\{ a_1,\cdots,a_{l-1} \}$.

In the direct problem, the length of the symmetric sequence is of great interest but we know very little about it. Assume $d$ is square-free so that $D$ is the field discriminant of $\mathbb{Q}(\sqrt{d})$. Dirichlet's class number formula for $\mathbb{Q}(\sqrt{d})$ is
\begin{equation}\label{eqn_Dirichlet class number formula}
    h_d = \frac{\sqrt{D}L(1,\chi)}{2 \log \varepsilon_d}
\end{equation}
where the $L$-value is bounded in a relatively narrow range $\frac{1}{D^{\epsilon}} \ll L(1,\chi) \ll \log D$  for any $\epsilon > 0$ \cite{Li}. It is a fact that the class number $h_d$ varies in a very wide range $1 \leq h_d \ll \sqrt{D}$, which is necessarily equivalent to saying that $\log \varepsilon_d$ varies as much as this. Using $q_n = a_n q_{n-1} + q_{n-2}$ \cite{Hardy}, $a_n < \omega_d + 1/2$ for $n < l$(proposition~\ref{prop_partial quotients of omega d}) and $\frac{p_{l-1}}{q_{l-1}} - \omega_d \ll (q_{l-1})^{-2}$, it can be shown that $l(\omega_d) \ll \log \varepsilon_d \ll l(\omega_d)\omega_d$. Therefore the size of $h_d$ is closely related to the length $l(\omega_d)$ of the symmetric sequence. It is believed that the period $l(\omega_d)$ of $\omega_d$ is as large as $D^{1/2 - \epsilon}$ fairly often, but there is absolutely no result even close to this according to the author's knowledge.

On the other hand, the \emph{inverse} problem is as follows: given a symmetric sequence $\{ a_1,\cdots,a_{l-1} \}$ of positive integers, find all $d$'s such that $\omega_d = [a_0,\overline{a_1,\cdots,a_{l-1}, a_l}]$ for some $a_0,a_l$. Let $\frak{D}'(a_1,\cdots,a_{l-1})$ be the set of positive non-square integers $d \equiv 1$ mod $4$ such that $\omega_d = [a_0,\overline{a_1,\cdots,a_{l-1},a_l}]$ for some $a_0,a_l$. In \cite{Friesen},\cite{Halter_Koch}, a polynomial $f$ of degree 2 is associated to $\{ a_1,\cdots,a_{l-1} \}$ so that either $\frak{D}'(a_1,\cdots,a_{l-1})$ is an empty set or it consists of all positive integers of the form $d = f(m)$ with $m \geq m_0$, where $m_0$ depends on the symmetric sequence. Similarly, $\frak{D}(a_1,\cdots,a_{l-1})$ denotes the set of positive non-square integers $d$ such that $\sqrt{d} = [a_0,\overline{a_1,\cdots,a_{l-1},a_l}]$, which either is empty or consists of all positive integers of the form $d = f(m)/4$ with $m \geq m_0$.

Like the length $l(\omega_d)$ is of interest in the direct problem, the size of $d$ is of interest in the inverse problem. It seems that the smallest elements of $\frak{D}(a_1,\cdots,a_n)$ and $\frak{D}'(a_1,\cdots,a_n)$ are very different from the remaining elements, as some studies show. For example, let $p$ be a prime $\equiv 1$ mod $4$ and $(t + u\sqrt{p})/2$ the fundamental unit of $\mathbb{Q}(\sqrt{p})$. Ankeny, Artin and Chowla conjectured that \cite{AnkenyArtinChowla} $u \not\equiv 0$ mod $p$, and Hashimoto showed that \cite{Hashimoto2001143} unless $p$ is the smallest element of $\frak{D}'(a_1,\cdots,a_{l-1})$ for some $\{ a_1,\cdots,a_{l-1} \}$, $u$ is less than $p$ and so the conjecture is true for this $p$. Another example is the notion of \emph{minimal type} introduced in \cite{Kawamoto}. Throughout section 3 of \cite{Kawamoto}, it can be red off that whenever a non-square integer $d$ is of minimal type for $\sqrt{d}$ or $(1+\sqrt{d})/2$, it is the smallest element of $\frak{D}(a_1,\cdots,a_{l-1})$ or $\frak{D}'(a_1,\cdots,a_{l-1})$ for some symmetric sequence $\{ a_1,\cdots,a_{l-1}\}$. In that paper, it is shown that fundamental units of real quadratic fields that are not of minimal type are relatively small, and among such fields exactly 51 (with one more possible exception) have class number 1.

The purpose of this paper is to treat the inverse problem in a different way, and to show that almost all non-square integers are the least elements of $\frak{D}(a_1,\cdots,a_{l-1})$ or $\frak{D}'(a_1,\cdots,a_{l-1})$ for some $\{ a_1,\cdots,a_{l-1}\}$. In this paper we say `almost all' to mean
\begin{equation*}
    \lim _{N \rightarrow \infty}\frac{\#\{ \text{exceptions between } 1 \text{ and } N \}}{N} = 0.
\end{equation*}
Write $x/y = [0,a_1,\cdots,a_{l-1}]$ so that the direct problem becomes to find corresponding rational number $x/y$ for a given $d$. We forget about the symmetry of $\{a_1,\cdots,a_{l-1}\}$ now, and formulate the inverse problem as follows: given any nonnegative rational number $x/y < 1$, find all $d$'s such that $\varepsilon_d = a_0 y  + x + y \omega_d$ or $(a_0 - 1) y  + x + y \omega_d$. The basic idea in our approach is to examine the approximation quality of $x/y$ to the fractional part of $\omega_d$. Roughly speaking, we will show that this approximation can be `sufficiently good' only when $x^2 \equiv \pm 1$ mod $y$ and $d = g(m)$ for some integer $m$, where $g$ is a quadratic polynomial that depends on $x/y$. Consequently we rediscover quadratic progressions related to the inverse problem.

The contents are as follows. In section~\ref{sec_Preliminary results} we list down several facts about square-free integers and continued fractions, together with prescribed results from \cite{Friesen},\cite{Halter_Koch}. In section~\ref{sec_Attached intervals} we will explain what the meaning of `sufficiently good' shall be, and define very narrow intervals assigned to each positive rational number $a_0 + x/y$. We determine exactly when such an interval contains an integer, and specifies that integer in terms of a quadratic polynomial. In section~\ref{sec_Dominance of the least elements} we prove that the non-square integers that are not the smallest elements of $\frak{D}(a_1,\cdots,a_{l-1})$ or $\frak{D}'(a_1,\cdots,a_{l-1})$ constitute a measure zero set among natural numbers, showing that almost all real quadratic number fields are of the `least type'.

\section{Preliminary results}\label{sec_Preliminary results}

Let $Q(x)$ be the number of square-free integers between 1 and $x$. It is well known(for example, see theorem 333 of \cite{Hardy}) that $Q(x) = \frac{6}{\pi^2}x + O(\sqrt{x})$. In \cite{Chao_Hua}, under Riemann hypothesis it was proved that $Q(x) = \frac{6}{\pi^2}x + O(x^{17/54 + \epsilon})$. This suggests that about $60.79\%$ of the integers between $n^2$ and $(n+1)^2$ shall be square-free for every sufficiently large $n$. In addition to this, there is a useful

\begin{thm}[\cite{Prachar}\label{thm_square-free_congruence}]
Let $S(x;c,k)$ be the number of square-free integers between $1$ and $x$ and that are congruent to $c$ modulo $k$. Assume $(c,k)=1$ and $k \leq x^{2/3-\epsilon}$. Then
\begin{equation*}
S(x;c,k) \sim \frac{6 x}{\pi^2 k} \prod_{p \mid k} \left(  1 - \frac{1}{p^2}  \right)^{-1} \;\;\;\;(x \rightarrow \infty).
\end{equation*}
\end{thm}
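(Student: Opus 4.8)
The plan is to detect square-freeness with the M\"obius function and reduce the count to lattice points in arithmetic progressions. Writing $\mathbf{1}_{\mathrm{sqfree}}(n) = \sum_{d^2 \mid n}\mu(d)$ and interchanging the order of summation,
\[
S(x;c,k) = \sum_{\substack{n \le x\\ n\equiv c\,(\mathrm{mod}\ k)}}\ \sum_{d^2\mid n}\mu(d) = \sum_{d}\mu(d)\, N(x;c,k,d),
\]
where $N(x;c,k,d) = \#\{n\le x : n\equiv c\ (\mathrm{mod}\ k),\ d^2\mid n\}$. First I would analyze $N$ by the Chinese remainder theorem. Since $(c,k)=1$, the simultaneous congruences $n\equiv c\ (\mathrm{mod}\ k)$ and $n\equiv 0\ (\mathrm{mod}\ d^2)$ are solvable only when $(d,k)=1$, because any common factor of $k$ and $d^2$ would have to divide $c$; when $(d,k)=1$ they pin $n$ down to a single residue class modulo $kd^2$, giving $N = x/(kd^2) + O(1)$.

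Next I would isolate the main term. Only $d\le\sqrt{x}$ contribute, and extending the resulting series to infinity produces
\[
\sum_{(d,k)=1}\frac{\mu(d)}{d^2} = \frac{1}{\zeta(2)}\prod_{p\mid k}\Bigl(1-\tfrac{1}{p^2}\Bigr)^{-1} = \frac{6}{\pi^2}\prod_{p\mid k}\Bigl(1-\tfrac{1}{p^2}\Bigr)^{-1},
\]
so the main term is exactly $\frac{6x}{\pi^2 k}\prod_{p\mid k}(1-p^{-2})^{-1}$, matching the claimed constant. The tail of this series beyond a truncation level $D$ contributes $O(x/(kD))$, and the accumulated $O(1)$ errors from $N$ over $d\le D$ contribute $O(D)$.

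The delicate point is the error term, and this is where the hypothesis $k\le x^{2/3-\epsilon}$ enters. The naive bound, summing the trivial $O(1)$ in $N$ over all $d\le\sqrt{x}$, yields only $O(\sqrt{x})$, which is $o(x/k)$ only for $k=o(\sqrt{x})$ and is therefore too weak to cover $k$ up to $x^{2/3-\epsilon}$. To reach the full range I would instead bound the contribution $R$ of the square divisors with $d>D$, namely the number of $n\le x$ in the progression divisible by some $d^2>D^2$. Writing $n=d^2 m$ and, for each admissible $d$, counting $m\le x/d^2$ in the fixed residue class $c\,\overline{d^{\,2}}\pmod k$ (where $\overline{d^{\,2}}$ is the inverse of $d^2$ modulo $k$), the count splits into a smooth part of size $O(x/(kD))$ and a sum of sawtooth terms $\sum_{D<d\le\sqrt{x}}\psi\!\bigl((x/d^2-a_d)/k\bigr)$, with $\psi(t)=t-\lfloor t\rfloor-\tfrac12$.

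The main obstacle is to beat the trivial bound for this last sum. Expanding $\psi$ in its Fourier series reduces the problem to incomplete exponential sums of the shape $\sum_{d}e\!\bigl(h\,c\,\overline{d^{\,2}}/k\bigr)$, with $e(t)=e^{2\pi i t}$, taken over $d$ coprime to $k$ in a short range. A nontrivial estimate for these, obtained by completing the sum modulo $k$ and invoking Weil-type bounds for the resulting Kloosterman-like character sums (or, alternatively, a van der Corput exponent-pair argument), is exactly what produces a saving over $\sqrt{x}$ and drives the total error below $o(x/k)$ throughout $k\le x^{2/3-\epsilon}$. Balancing the parameter $D$ against $O(D)$, $O(x/(kD))$, and the exponential-sum bound then closes the estimate and yields the stated asymptotic. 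I expect this exponential-sum input, rather than the elementary main-term extraction, to be the crux of the argument.
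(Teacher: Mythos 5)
First, a framing remark: the paper contains no proof of this statement --- it is quoted directly from Prachar --- so your proposal can only be judged against Prachar's own argument and the intrinsic demands of the claim. Your first half is correct and standard: the M\"obius detection, the CRT observation that $(k,d^2)\mid c$ together with $(c,k)=1$ forces $(d,k)=1$, the main term $\frac{6x}{\pi^2 k}\prod_{p\mid k}(1-p^{-2})^{-1}$, and the bookkeeping $O(D)+O(x/(kD))$. The genuine gap is that the step you yourself call the crux is asserted, not proved, and as stated it is not yet a workable argument. The Fourier expansion of $\psi\bigl((x/d^2-a_d)/k\bigr)$ produces \emph{hybrid} phases $e\bigl(hx/(kd^2)\bigr)\,e\bigl(-hc\overline{d^{\,2}}/k\bigr)$, not the pure sums $\sum_d e\bigl(hc\overline{d^{\,2}}/k\bigr)$ you reduce to: the analytic factor is not periodic modulo $k$, so before completing you must strip it off by partial summation, and near the truncation point $d\asymp D=\sqrt{x/k}$ its total variation over the range is of size $\asymp h$, so after summing over harmonics this cost can swamp the $k^{1/2+\epsilon}$ Weil saving; that regime needs a genuinely different (stationary phase / van der Corput) treatment, which you gesture at but never perform. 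Further unaddressed points: completion is lossy on $d$-ranges shorter than $\sqrt{k}$, while $k$ may be as large as $x^{2/3-\epsilon}$; the complete sums $\sum_{d \bmod k} e\bigl((hc\overline{d^{\,2}}+ud)/k\bigr)$ require care when $(h,k)>1$ and when $k$ is not squarefree (Cochrane--Zheng-type bounds rather than Weil); and no balancing is actually carried out to verify that the assembled error is $o(x/k)$ throughout the range. So the decisive estimate in your write-up is a placeholder.

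Equally important, the machinery is misallocated: exponential sums of this kind are what later authors (Hooley, and more recently Nunes) used to push the admissible range \emph{beyond} $x^{2/3}$; the theorem as stated has an elementary proof, which is Prachar's. After truncating at $D=\sqrt{x/k}$, write the tail elements as $n=d^2m$ and split at $x^{1/3}$. For $D<d\le x^{1/3}$, count $m\le x/d^2$ with $m\equiv c\overline{d^{\,2}} \pmod k$, contributing $\ll x/(Dk)+x^{1/3}$. For $d>x^{1/3}$, i.e.\ $m<x^{1/3}$, fix $m$ and count $d\le\sqrt{x/m}$ with $d^2\equiv c\overline{m} \pmod k$: this congruence has $\ll 2^{\omega(k)}\ll k^{\epsilon}$ solutions per period, giving $\ll k^{\epsilon}\bigl(\sqrt{x/m}/k+1\bigr)$ admissible $d$, and summing over $m\le x^{1/3}$ yields $\ll k^{\epsilon}\bigl(x^{2/3}/k+x^{1/3}\bigr)$. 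The total error is then $O\bigl((\sqrt{x/k}+x^{1/3})\,x^{\epsilon}\bigr)$, and $x^{1/3+\epsilon}=o(x/k)$ precisely when $k\le x^{2/3-\epsilon}$ --- the hypothesis of the theorem is exactly the limit of this elementary count, with no exponential sums at all. If you do want to pursue your route, the honest target is an improvement of the range, and then the hybrid-phase and short-range issues above are the real work.
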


In particular, $S(x;1,4) \sim S(x;3,4) \sim \frac{1}{3} \frac{6 }{\pi^2}x$ and hence each third of square-free numbers is congruent to $1,2$, and $3$ modulo $4$.\\

Regarding continued fractions, we mostly use the conventions in chapter 10 of \cite{Hardy}. We denote the simple continued fraction expansion of a positive real number $x$ by $x = [a_0,a_1,a_2,\cdots]$ and its $n$-th convergent by $p_n / q_n = [a_0,a_1,\cdots,a_n]$. By definition, we always assume $a_i > 0$ for $i > 0$. We write $[0,a_1,\cdots,a_n] = r_n / q_n$ and use the convention $(q_{-2},p_{-2}) = (1,0)$, $(q_{-1},p_{-1}) = (0,1)$. Given an expansion of $x$, we call $a_n$ the $n$-th partial quotient of $x$, and $\alpha_n = [a_n,a_{n+1},a_{n+2},\cdots]$ the $n$-th total quotient. It worths to mention that $q_n$ is determined only by $a_1,a_2,\cdots,a_n$ because $a_0 = \lfloor p_n / q_n \rfloor$ has nothing to do with the denominator $q_n$.

In this manuscript we consider non-square integers $d$ and the expansion $\omega_d = [a_0,a_1,a_2,$ $\cdots]$. For $x \in \mathbb{Q}(\sqrt{d})$, let $\overline{x}$ be its conjugate and $N(x) = x \overline{x}$. For the $n$-th convergent $p_n / q_n$ of $\omega_d$, put

\begin{equation}\label{eqn_xi_n}
\xi_n = \overline{ p_n - q_n \omega_d} =\begin{cases}
p_n - q_n + q_n \omega_d & \text{if $d \equiv 1$ mod $4$}\\
p_n + q_n \omega_d & \text{otherwise}
\end{cases}
\end{equation}

and let $\nu_n = |N(\xi_n)| = |\frac{p_n}{q_n} - \omega_d| q_n \xi_n$. Recall that $p_n / q_n > \omega_d$ if and only if $n$ is odd (theorem 163 of \cite{Hardy}), so
\begin{equation}\label{eqn_nu_n}
    \nu_n = (-1)^{n+1}N(\xi_n).
\end{equation}
We say that a quadratic integer $\xi \in \mathbb{Z}[\omega_d]$ \emph{comes from} a convergent to $\omega_d$ when $\xi = \xi_n$ for some $n$. A quadratic integer with norm $\pm 1$ is called a quadratic unit. As usual $\lfloor x \rfloor$ denotes the greatest integer not exceeding $x$.

On this setting, we have a basic
\begin{prop}[Theorem 150 in \cite{Hardy}]\label{prop_p q minus q p}
$p_n q_{n-1} - p_{n-1}q_n = (-1)^{n+1}$.
\end{prop}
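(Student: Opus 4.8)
The plan is to prove the identity by induction on $n$, using the standard recurrence relations for the numerators and denominators of the convergents. Recall (as already invoked in the introduction through $q_n = a_n q_{n-1} + q_{n-2}$) that the convergents satisfy
\begin{equation*}
p_n = a_n p_{n-1} + p_{n-2}, \qquad q_n = a_n q_{n-1} + q_{n-2},
\end{equation*}
for all $n \geq 0$, these being compatible with the stated conventions $(q_{-2},p_{-2}) = (1,0)$ and $(q_{-1},p_{-1}) = (0,1)$. Writing $\Delta_n := p_n q_{n-1} - p_{n-1} q_n$, the goal is to show $\Delta_n = (-1)^{n+1}$ for every $n \geq 0$.

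For the base case I would check $n = 0$ directly: with $p_0 = a_0$, $q_0 = 1$, $p_{-1} = 1$, $q_{-1} = 0$, one gets $\Delta_0 = a_0 \cdot 0 - 1 \cdot 1 = -1 = (-1)^{0+1}$, as required. (Equivalently, anchoring at $n=-1$, the conventions give $\Delta_{-1} = p_{-1}q_{-2} - p_{-2}q_{-1} = 1 \cdot 1 - 0 \cdot 0 = 1$, which also matches.) For the inductive step, assuming $\Delta_{n-1} = (-1)^n$, I would substitute the recurrences into $\Delta_n$ and watch the $a_n$-terms cancel:
\begin{align*}
\Delta_n &= (a_n p_{n-1} + p_{n-2}) q_{n-1} - p_{n-1}(a_n q_{n-1} + q_{n-2}) \\
&= p_{n-2} q_{n-1} - p_{n-1} q_{n-2} = -\Delta_{n-1} = -(-1)^n = (-1)^{n+1}.
\end{align*}
This closes the induction and establishes the identity for all $n \geq 0$.

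Since this is the elementary determinant identity for continued-fraction convergents, there is no genuine obstacle here; the only points demanding care are the sign bookkeeping in the alternating exponent and aligning the base case with the paper's particular index conventions for $(q_{-2},p_{-2})$ and $(q_{-1},p_{-1})$. The key structural observation — that the two copies of $a_n p_{n-1} q_{n-1}$ cancel, so that $\Delta_n$ reduces to $-\Delta_{n-1}$ — is what drives the whole computation, and it becomes immediate once the recurrences are in hand.
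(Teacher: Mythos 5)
Your proof is correct and is exactly the standard argument: the paper offers no proof of its own, quoting the identity as Theorem 150 of Hardy's book, where it is established by precisely this induction with the $a_n p_{n-1} q_{n-1}$ terms cancelling so that $\Delta_n = -\Delta_{n-1}$. Your base case and sign bookkeeping also align correctly with the paper's conventions $(q_{-2},p_{-2}) = (1,0)$ and $(q_{-1},p_{-1}) = (0,1)$, so nothing needs to be changed.
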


The following appears on p.141 of \cite{Hardy}:
\begin{equation}\label{eqn_total quotient to number}
    [a_0,a_1,\cdots,a_n,\alpha_{n+1}] = \frac{\alpha_{n+1} p_n + p_{n-1}}{\alpha_{n+1} q_n + q_{n-1}}
\end{equation}
and hence

\begin{equation}\label{eqn_difference of convergents}
    [a_0,\cdots,a_n,\alpha_{n+1}] - [a_0,\cdots,a_n] = \frac{\alpha_{n+1} p_n + p_{n-1}}{\alpha_{n+1} q_n + q_{n-1}} - \frac{p_n}{q_n} = \frac{(-1)^n}{q_n(\alpha_{n+1} q_n + q_{n-1})}.
\end{equation}

We also include a

\begin{lem}\label{lem_quotient_norm}
For $n \geq 0$
\begin{equation*}
\alpha_{n+1} =
\frac{\sqrt{D}}{\nu_n} - \frac{q_{n-1}}{q_n} + \delta_n < \frac{\sqrt{D}}{\nu_n}
\end{equation*}
where $|\delta_n| < \frac{4}{q^2_n \sqrt{D}}$.
\end{lem}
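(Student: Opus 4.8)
The plan is to start from the total-quotient identity \eqref{eqn_total quotient to number}, which with $x=\omega_d$ reads $\omega_d=\frac{\alpha_{n+1}p_n+p_{n-1}}{\alpha_{n+1}q_n+q_{n-1}}$, and solve it for $\alpha_{n+1}$ to obtain $\alpha_{n+1}=\frac{p_{n-1}-q_{n-1}\omega_d}{q_n\omega_d-p_n}$. I would extract the $-q_{n-1}/q_n$ summand first: adding $q_{n-1}/q_n$ and clearing denominators, the numerator collapses to $q_np_{n-1}-q_{n-1}p_n=(-1)^n$ by Proposition~\ref{prop_p q minus q p}, so that
\[
\alpha_{n+1}+\frac{q_{n-1}}{q_n}=\frac{(-1)^n}{q_n(q_n\omega_d-p_n)}=\frac{1}{q_n^2\,\lvert \tfrac{p_n}{q_n}-\omega_d\rvert},
\]
where the last equality uses that $\tfrac{p_n}{q_n}-\omega_d$ has sign $(-1)^{n+1}$ (theorem 163 of \cite{Hardy}).

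Next I would convert the right-hand side into the quantities $\xi_n,\nu_n,\sqrt D$. The defining relation $\nu_n=\lvert \tfrac{p_n}{q_n}-\omega_d\rvert q_n\xi_n$ gives $q_n^2\lvert \tfrac{p_n}{q_n}-\omega_d\rvert=q_n\nu_n/\xi_n$, hence $\alpha_{n+1}+\frac{q_{n-1}}{q_n}=\frac{\xi_n}{q_n\nu_n}$. The key observation is that $\omega_d-\overline{\omega_d}=\sqrt D$ holds in both the $d\equiv1$ and $d\not\equiv1$ cases, so from $\xi_n=p_n-q_n\overline{\omega_d}$ one gets $\frac{\xi_n}{q_n}=\frac{p_n}{q_n}-\overline{\omega_d}=\sqrt D+\bigl(\frac{p_n}{q_n}-\omega_d\bigr)$. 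Dividing by $\nu_n$ isolates the main term $\sqrt D/\nu_n$ and identifies the error $\delta_n=\frac{1}{\nu_n}\bigl(\frac{p_n}{q_n}-\omega_d\bigr)$, which establishes the claimed identity.

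Finally, for the bound I would note that the same defining relation for $\nu_n$ collapses $\delta_n$ to $\lvert\delta_n\rvert=\frac{1}{q_n\xi_n}$. Writing $\xi_n=q_n\sqrt D+(p_n-q_n\omega_d)$ and using the standard estimate $\lvert p_n-q_n\omega_d\rvert<1/q_{n+1}\le1$ yields $\xi_n>q_n\sqrt D-1$; since $\sqrt D>2$ and $q_n\ge1$ this is comfortably larger than $q_n\sqrt D/4$, giving $\lvert\delta_n\rvert<\frac{4}{q_n^2\sqrt D}$ with room to spare. For the strict inequality $\alpha_{n+1}<\sqrt D/\nu_n$, equivalently $\delta_n<q_{n-1}/q_n$, the even-$n$ case is immediate because then $\delta_n<0$ (including $n=0$, where $q_{-1}=0$), and the odd-$n$ case reduces to $q_{n-1}\xi_n>1$, which follows from $\xi_n>1$ and $q_{n-1}\ge1$. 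I expect the only real subtlety to be this sign bookkeeping --- tracking the parity-dependent sign of $\frac{p_n}{q_n}-\omega_d$ and confirming strictness for odd $n$ --- rather than any genuine difficulty; recognizing the uniform identity $\omega_d-\overline{\omega_d}=\sqrt D$ is what makes the whole computation clean.
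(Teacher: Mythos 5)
Your proof is correct, and it takes a genuinely different route from the paper. The paper proves lemma~\ref{lem_quotient_norm} geometrically: it interprets the convergents as lattice points $A=(q_{n-1},p_{n-1})$, $B=(q_n,p_n)$ on the $xy$-plane, compares the areas of the triangles $OBD$, $BCD$, $OBC$ cut off by the line $y=\omega_d x$, and recovers the error term via a geometric-series expansion $\epsilon_n'=-\epsilon_n+\epsilon_n^2-\cdots$, ending with $|\epsilon_n''|<\frac{2}{q_n^2\omega_d}<\frac{4}{q_n^2\sqrt D}$; because its strictness argument for $n\geq 1$ uses only the crude bound $|\delta_n|<\frac{4}{q_n^2\sqrt D}$ (which forces $\sqrt D>4$), it must separately check the four discriminants $D<16$ by hand and compute $\alpha_1$ directly for $n=0$. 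Your algebraic derivation instead produces the error term in exact closed form, $\delta_n=\frac{1}{\nu_n}\bigl(\frac{p_n}{q_n}-\omega_d\bigr)=\frac{(-1)^{n+1}}{q_n\xi_n}$, from which both the bound (via $\xi_n>q_n\sqrt D-1>\frac{1}{4}q_n\sqrt D$) and the strict inequality (sign of $\delta_n$ for even $n$, including $n=0$ with $q_{-1}=0$; $q_{n-1}\xi_n>1$ for odd $n$) follow uniformly for all $n\geq 0$ and all non-square $d$, with no small-discriminant case analysis. Amusingly, your starting identity $\alpha_{n+1}=\frac{p_{n-1}-q_{n-1}\omega_d}{q_n\omega_d-p_n}$ is precisely the manipulation the paper itself performs later, in the proof of proposition~\ref{prop_partial quotients of omega d}, so your argument shows the lemma could have been absorbed into that computation. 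What the paper's route buys is the geometric picture behind the approximation quality; what yours buys is brevity, an exact rather than series-expanded $\delta_n$, and a wider range of validity. One small point of hygiene: the paper's formula $\nu_n=\bigl|\frac{p_n}{q_n}-\omega_d\bigr|q_n\xi_n$, which you invoke early, tacitly presupposes $\xi_n>0$; you only establish this later via $\xi_n=q_n\sqrt D+(p_n-q_n\omega_d)>q_n\sqrt D-1$. Since that estimate uses nothing but the convergent inequality, there is no circularity, but you should state the positivity of $\xi_n$ at the outset rather than mid-proof.
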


\begin{proof}
We first prove that $\alpha_{n+1} < \sqrt{D}/\nu_n$ for $n \geq 0$. There are four fundamental discriminants 5,8,12,13 that are less than 16, and we have $\omega_2 = [1,\overline{2}]$, $\omega_3 = [1,\overline{1,2}]$, $\omega_5 = [1,\overline{1}]$, $\omega_{13} = [2,\overline{3}]$. One can easily check that $\alpha_{n+1} < \sqrt{D}/\nu_n$ for $n \geq 0$ in all these cases, so we assume $D>16$. Now suppose we have proved $|\delta_n| < \frac{4}{q^2_n \sqrt{D}}$. For $n \geq 1$, since $q_{n-1} > 0$, the term $q_{n-1}/q_n - \delta_n$ is positive and $\alpha_{n+1} < \sqrt{D}/\nu_n$. When $n = 0$, write
\begin{gather*}
\alpha_1 = \frac{1}{\omega_d - \lfloor \omega_d \rfloor}, \quad \xi_0 = \overline{p_0 - q_0 \omega_d} = \overline{\lfloor \omega_d \rfloor - \omega_d}, \\
\nu_0 = \begin{cases}
-(\lfloor \omega_d \rfloor - \omega_d)(\lfloor \omega_d \rfloor -1 + \omega_d)\;\; &\text{ if $d \equiv 1$ mod 4}\\
-(\lfloor \omega_d \rfloor - \omega_d)(\lfloor \omega_d \rfloor + \omega_d)\;\; &\text{ otherwise}
\end{cases}
\end{gather*}
and so
\begin{equation*}
    \alpha_1 = \begin{cases}
    \frac{\lfloor \omega_d \rfloor -1 + \omega_d}{\nu_0} < \frac{\frac{1+\sqrt{d}}{2} + \frac{-1+\sqrt{d}}{2}}{\nu_0} = \frac{\sqrt{D}}{\nu_0}\;\; &\text{ if $d \equiv 1$ mod 4}\\
    \frac{\lfloor \omega_d \rfloor + \omega_d}{\nu_0} < \frac{\sqrt{d} + \sqrt{d}}{\nu_0}= \frac{\sqrt{D}}{\nu_0}\;\; &\text{ otherwise}.
    \end{cases}
\end{equation*}
Thus it suffices to prove $|\delta_n| < \frac{4}{q^2_n \sqrt{D}}$.

The cases $d \equiv$ 2 and 3 (mod 4) are easier in computation, so here we assume $d \equiv 1$ (mod 4) so that $\omega_d = \frac{1 + \sqrt{d}}{2}$.
Recall that the continued fraction expansion of $\omega_d$ has a natural geometric interpretation on $xy$-plane. Let $O=(0,0)$ be the origin of the $xy$-plane, $A = (q_{n-1}, p_{n-1})$, $B = (q_{n}, p_{n})$, $C$ the intersection of $\overline{AB}$ and the line $y = \omega_d x$, and $D = (q_n, \omega_d q_n)$. Then $[\overline{AC}:\overline{CB}] = [\alpha_{n+1}:1]$ and the area of $\triangle{OAB}$ is 1/2. Observe that the area of $\triangle{OBD}$ is $\frac{1}{2}|(p_n - q_n \omega_d)q_n|$. Let $B' = (0,p_n)$, $D' = (0,\omega_d q_n)$.

We have
\begin{equation*}
\frac{\xi_n \overline{\xi_n}}{q_n^2} = \left( \frac{p_n}{q_n} - 1 + \omega_d \right) \left( \frac{p_n}{q_n} - 1 + 1 - \omega_d \right) = \pm \frac{\nu_n}{q_n^2}
\end{equation*}
or
\begin{equation*}
\frac{p_n}{q_n} - \omega_d = \frac{\pm \nu_n}{q_n (p_n - q_n + \omega_d q_n) }
\end{equation*}
and therefore
\begin{align*}
|\square B'B D D'| &= |(p_n - q_n \omega_d) q_n |\\
&= \frac{\nu_n}{p_n / q_n - 1 + \omega_d}\\
&= \frac{\nu_n}{2 \omega_d - 1 + (-1)^{n+1} \frac{\nu_n}{q_n (p_n - q_n + \omega_d q_n)}}\\
&= \frac{\nu_n}{2 \omega_d - 1} \left( \frac{1}{1 + (-1)^{n+1} \frac{\nu_n}{(2 \omega_d - 1) q_n (p_n - q_n + \omega_d q_n)} }  \right)\\
&= \frac{\nu_n}{\sqrt{d}} \left( 1 + \epsilon_n \right)
\end{align*}

where $\epsilon_n = \frac{(-1)^{n+1} \nu_n}{\sqrt{d} q_n (p_n - q_n + \omega_d q_n)} < \frac{1}{\omega_d q_n^2} \frac{\nu_n}{\sqrt{d}}$. Examining the ratios of the coordinates of $A,B$ and $C$, it is easily deduced that the area of $\triangle{BCD}$ is $\frac{1 - q_{n-1}/q_n}{1 + \alpha_{n+1}} \frac{\nu_n}{2\sqrt{d}}\left( 1+ \epsilon_n  \right) $, and hence
\begin{align*}
|\triangle{OBC}| & = |\triangle{OBD}| - |\triangle{BCD}| \\
&= \left(1 - \frac{1 - q_{n-1}/q_n}{1 + \alpha_{n+1}} \right) \frac{\nu_n}{2\sqrt{d}}\left( 1+ \epsilon_n  \right)\\
&= \left(\frac{\alpha_{n+1} + q_{n-1}/q_n}{1 + \alpha_{n+1}} \right) \frac{\nu_n}{2\sqrt{d}}\left( 1+ \epsilon_n  \right)
\end{align*}
But $|\triangle{OBC}| = |\triangle{OAB}| \frac{1}{1+\alpha_{n+1}} = \frac{1}{2(1+\alpha_{n+1})}$, whence $\left( \alpha_{n+1} + \frac{q_{n-1}}{q_n} \right) \frac{\nu_n}{\sqrt{d}}( 1+ \epsilon_n ) = 1$. Thus $\alpha_{n+1} = \frac{\sqrt{d}}{\nu_n} (1 + \epsilon_n') - \frac{q_{n-1}}{q_n}$ where $\epsilon_n' = -\epsilon_n + \epsilon_n^2 - \epsilon_n^3 + \cdots$ so $|\epsilon_n'| < 2 |\epsilon_n| < \frac{2\nu_n}{\omega_d q_n^2 \sqrt{d}}$. It follows that
\begin{equation*}
    \alpha_{n+1} = \frac{\sqrt{d}}{\nu_n} - \frac{q_{n-1}}{q_n} + \epsilon_n''
\end{equation*}
where $|\epsilon_n''| < \frac{2}{q_n^2 \omega_d} < \frac{4}{q_n^2 \sqrt{D}}$, which proves the lemma in case $d \equiv 1$ (mod 4).

When $d \equiv$ 2 or 3 (mod 4), exactly the same computation with continued fraction of $\omega_d = \sqrt{d}$ completes the proof.
\end{proof}

The followings are well known facts which we quote in appropriate forms.

\begin{prop}[Theorem 162 of \cite{Hardy}]\label{prop_two ways of continued fractions}
A positive rational number which is not equal to 1 can be expressed as a finite simple continued fraction in exactly two ways, one with an even and the other with an odd number of convergents. In one form the last partial quotient is 1, in the other it is greater than 1.
\end{prop}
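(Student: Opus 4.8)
The plan is to produce one expansion of each stated type and then to prove uniqueness, so that these two are the only ones.

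First I would produce a distinguished expansion by the greedy (floor) algorithm: set $a_0 = \lfloor x \rfloor$, and as long as the current value is not an integer, replace it by the reciprocal of its fractional part and take the next floor. Equivalently, this is the Euclidean algorithm applied to the numerator and denominator of $x$ written in lowest terms. Since $x$ is rational the process terminates, yielding $x = [a_0, a_1, \ldots, a_m]$ with $a_i > 0$ for $i > 0$. Because the last reciprocal taken is of a fraction strictly between $0$ and $1$, the final partial quotient satisfies $a_m \geq 2$ when $m \geq 1$; and when $m = 0$ we have $x = a_0 = \lfloor x \rfloor \geq 2$, since $x$ is then a positive integer unequal to $1$. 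I will call this the \emph{canonical} expansion; its last partial quotient exceeds $1$.

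Next I would exhibit the second form and record the parity. Since $[a_m - 1, 1] = (a_m - 1) + \tfrac{1}{1} = a_m$, I may substitute this two-term finite continued fraction for the total quotient $a_m$ occupying the last position; equation~\eqref{eqn_total quotient to number} confirms that the value is unchanged, giving the identity
\begin{equation*}
[a_0, a_1, \ldots, a_{m-1}, a_m] = [a_0, a_1, \ldots, a_{m-1}, a_m - 1, 1],
\end{equation*}
whose right-hand side is a bona fide simple continued fraction (all entries positive, last entry equal to $1$). The canonical form has $m + 1$ convergents and the derived form has $m + 2$; hence one has an even and the other an odd number of convergents, and exactly one of the two ends in the partial quotient $1$. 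This gives two expansions of the required types (the integer case $m = 0$, with forms $[a_0]$ and $[a_0 - 1, 1]$, is included here).

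The crux is uniqueness. The key point is that if $x = [b_0, b_1, \ldots, b_n]$ is \emph{any} simple continued fraction whose last partial quotient exceeds $1$, then for $n \geq 1$ the tail total quotient $[b_1, \ldots, b_n]$ is strictly greater than $1$, so that $x = b_0 + 1/[b_1,\ldots,b_n]$ with $0 < 1/[b_1,\ldots,b_n] < 1$; this forces $b_0 = \lfloor x \rfloor$, and for $n = 0$ the equality $b_0 = \lfloor x \rfloor$ is trivial. Stripping off $b_0$ and passing to $1/(x - b_0) = [b_1, \ldots, b_n]$, which is a rational of strictly smaller denominator again ending in a quotient greater than $1$, an induction shows that the canonical expansion is the \emph{only} expansion of $x$ with last partial quotient greater than $1$. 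Finally, any expansion ending in the quotient $1$, say $[c_0, \ldots, c_{s-1}, 1]$ with $s \geq 1$, collapses by the reverse of the identity above to $[c_0, \ldots, c_{s-1} + 1]$, which has last quotient greater than $1$ and so must be the canonical expansion; reversing the collapse shows the ``ends in $1$'' expansion is itself unique. Therefore $x$ has exactly two expansions, of opposite length parity, as claimed. The main obstacle is precisely this uniqueness step, and within it the clean verification that the leading quotient equals $\lfloor x \rfloor$ exactly when the last quotient exceeds $1$; this is where the hypothesis $x \neq 1$ is essential, since $[1] = 1$ is the one rational for which the single-term canonical fraction leaves no room for a genuinely distinct second form. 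Some care is also needed with the degenerate short cases ($m = 0$, and $x < 1$ with $a_0 = 0$) to confirm that the parity count is correct there as well.
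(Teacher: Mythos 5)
Your proof is correct. Note that the paper itself offers no proof of this proposition---it is quoted as Theorem 162 of \cite{Hardy}---and your argument (existence via the Euclidean/greedy algorithm with last quotient $\geq 2$, the identity $[a_0,\ldots,a_{m-1},a_m]=[a_0,\ldots,a_{m-1},a_m-1,1]$, and uniqueness by the floor argument with induction on the denominator, with $x\neq 1$ correctly isolated as the one case where the two forms would coincide) is essentially the classical proof found in that source, so there is nothing to compare beyond noting the match.
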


For a rational number $p/q \neq 1$, we will usually write
\begin{equation*}
    p/q = [a_0,\cdots,a_n,1] = [a_0,\cdots,a_{n-1},1 + a_n].
\end{equation*}

\begin{prop}[\cite{Hardy}\label{prop_Hardy_convergent}]
If (p,q) = 1 and
\begin{equation*}
\left| \frac{p}{q} - x \right| < \frac{1}{2 q^2}
\end{equation*}
then $p/q$ is a convergent to $x$.
\end{prop}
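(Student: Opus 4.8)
\section*{Proof proposal}

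The plan is to exhibit $p/q$ as a genuine truncation of a total quotient of $x$, that is, to manufacture a simple continued fraction of $x$ in which $p/q$ literally appears as a convergent. First I would dispose of the trivial case $x = p/q$, so assume $\delta := x - p/q \neq 0$ with $|\delta| < 1/(2q^2)$. Expanding the rational number $p/q = [a_0,a_1,\dots,a_n]$ produces convergents $p_i/q_i$ with $p_n/q_n = p/q$; crucially, by Proposition~\ref{prop_two ways of continued fractions} I am free to choose the number of partial quotients $n$ to have either parity, and I would select the parity for which $(-1)^n \delta > 0$.

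Next, using the total-quotient identity \eqref{eqn_total quotient to number}, I would \emph{define} the real number $\omega$ by demanding that $x = [a_0,\dots,a_n,\omega]$, i.e. $\omega = (x q_{n-1} - p_{n-1})/(p_n - x q_n)$. Substituting $x = p_n/q_n + \delta$ and invoking Proposition~\ref{prop_p q minus q p} to rewrite $p_n q_{n-1} - p_{n-1} q_n$ as $(-1)^{n+1}$, a one-line computation collapses this to
\[
\omega = \frac{(-1)^n}{q_n^2 \delta} - \frac{q_{n-1}}{q_n}.
\]
The decisive step is then the estimate $\omega > 1$. By the parity choice the first term equals $1/(q_n^2 |\delta|)$, which exceeds $2$ precisely because $|\delta| < 1/(2 q_n^2)$; and $0 \le q_{n-1}/q_n \le 1$ since $q_n = a_n q_{n-1} + q_{n-2} \ge q_{n-1}$. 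Hence $\omega > 2 - 1 = 1$.

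Once $\omega > 1$ is secured, I would expand $\omega = [a_{n+1},a_{n+2},\dots]$ with $a_{n+1} = \lfloor \omega \rfloor \ge 1$ and all later partial quotients positive. Concatenating yields a bona fide simple continued fraction $x = [a_0,\dots,a_n,a_{n+1},\dots]$ all of whose partial quotients past $a_0$ are positive integers. Since the $n$-th convergent of this expansion is determined by $a_0,\dots,a_n$ alone, it equals $p_n/q_n = p/q$, so $p/q$ is a convergent to $x$, as claimed.

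I expect the only genuinely delicate point to be the bookkeeping around the parity of $n$ and the sign of $\delta$: the whole argument hinges on being able to force $(-1)^n \delta > 0$, which is exactly what the two-representation Proposition~\ref{prop_two ways of continued fractions} supplies, and on checking that the degenerate short expansions (for instance $n=0$, where $q_{-1}=0$ so the term $q_{n-1}/q_n$ simply vanishes) remain covered by the same formula. Everything else is a routine substitution, so the argument should go through cleanly.
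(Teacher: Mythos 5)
Your argument is correct: the parity trick of Proposition~\ref{prop_two ways of continued fractions}, the inversion of \eqref{eqn_total quotient to number} combined with Proposition~\ref{prop_p q minus q p} to get $\omega = (-1)^n/(q^2\delta) - q_{n-1}/q_n$, and the bound $\omega > 2 - 1 = 1$ all check out, including the $n=0$ corner case. The paper gives no proof of this proposition (it is quoted from the cited reference of Hardy and Wright), and your reconstruction is essentially the classical proof of Theorem~184 in that source, so there is nothing to diverge from.
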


We also recall that a quadratic irrational $x$ is \emph{reduced} if $x > 1$ and $-1 < \overline{x} < 0$, and that the continued fraction expansion of $x$ is purely periodic if and only if $x$ is reduced (for example, see theorem 7.20 of \cite{niven1991introduction}). In particular $\sqrt{d} + \lfloor \sqrt{d} \rfloor$ and $\frac{1 + \sqrt{d}}{2} + \lfloor \frac{1 + \sqrt{d}}{2} \rfloor - 1$ are reduced, so the continued fraction expansion of $\omega_d$ is of the form
\begin{gather}\label{eqn_expansion of omega d}
    \sqrt{d} = [a_0,\overline{a_1,\cdots,a_{l-1}, 2 a_0}],\quad    \frac{1+\sqrt{d}}{2} = [a_0,\overline{a_1,\cdots,a_{l-1}, 2 a_0 - 1}].
\end{gather}

\begin{prop}\label{prop_partial quotients of omega d}
Let $d$ be a non-square positive integer and assume
\begin{equation*}
    \sqrt{d} = [a_0,\overline{a_1,\cdots,a_{l-1}, 2 a_0}]
\end{equation*}
where $l$ is the period of this expansion. Then $a_i < \sqrt{d} + 1$ for $0 < i < l$, and the fundamental unit of $\mathbb{Z}[\sqrt{d}]$ comes from the $(l-1)$-th convergent to $\sqrt{d}$. Similarly, let $d \equiv 1$ mod $4$ be a non-square positive integer and let $\omega_d = [b_0,\overline{b_1,\cdots,b_{l'-1}, 2 b_0 - 1}]$. Then $b_j < \omega_d + 1$ for $0 < j < l'$ and the fundamental unit of $\mathbb{Z}[\omega_d]$ comes from the $(l'-1)$-th convergent to $\omega_d$.
\end{prop}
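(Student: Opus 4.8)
The plan is to deduce both assertions---the bound on the partial quotients and the location of the fundamental unit---from a single statement about the norms $\nu_n$: that for $\sqrt{d}$ one has $\nu_n \geq 2$ for $0 \leq n \leq l-2$ while $\nu_{l-1} = 1$, and symmetrically for $\omega_d$ with $l'$ in place of $l$. Granting this, the bound on the partial quotients is immediate from Lemma~\ref{lem_quotient_norm}: for $0 < i < l$ we have $0 \leq i-1 \leq l-2$, so $\nu_{i-1} \geq 2$ and hence $a_i = \lfloor \alpha_i \rfloor \leq \alpha_i < \sqrt{D}/\nu_{i-1} \leq \sqrt{D}/2$. Since $D = 4d$ in the $\sqrt{d}$ case this reads $a_i < \sqrt{d} < \sqrt{d}+1$, and since $D = d$ in the $\omega_d$ case it reads $b_j < \sqrt{d}/2 < \omega_d + 1$ (in fact stronger than required). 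So everything reduces to controlling the $\nu_n$, which I set up by writing the purely periodic total quotients $\alpha_n$ ($n \geq 1$) as reduced quadratic irrationals $\alpha_n = (P_n + \sqrt{d})/Q_n$.

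Next I would establish $\nu_{l-1} = 1$ by a direct computation at the end of a period. Because the tail is purely periodic, the $l$-th total quotient is $\alpha_l = \sqrt{d} + a_0$: indeed $\sqrt{d} = a_0 + 1/\alpha_1$ gives $1/\alpha_1 = \sqrt{d} - a_0$, and $\alpha_l = a_l + 1/\alpha_{l+1} = 2a_0 + 1/\alpha_1 = \sqrt{d} + a_0$. Substituting $\alpha_l$ into \eqref{eqn_total quotient to number} and comparing rational and irrational parts yields $p_{l-2} = d\,q_{l-1} - a_0 p_{l-1}$ and $q_{l-2} = p_{l-1} - a_0 q_{l-1}$; feeding these into Proposition~\ref{prop_p q minus q p} collapses the identity to $p_{l-1}^2 - d\,q_{l-1}^2 = (-1)^l$, i.e. $\nu_{l-1}=1$. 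The same computation with $\alpha_{l'} = \omega_d + b_0 - 1$, the reduced element appearing in \eqref{eqn_expansion of omega d}, gives $\nu_{l'-1} = 1$ for $\omega_d$; the only additional work is the arithmetic of $\omega_d$ and $\overline{\omega_d}$.

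The lower bound $\nu_n \geq 2$ for $0 \leq n \leq l-2$ is the remaining half of the core statement. The standard identity $p_n^2 - d\,q_n^2 = (-1)^{n+1} Q_{n+1}$ identifies $\nu_n = Q_{n+1}$ for $\sqrt{d}$, and in the $\omega_d$ normalization one checks $\nu_n = Q_{n+1}/2$ (the factor reflecting $Q_0 = 2$). Since each $\alpha_m$ with $m \geq 1$ is reduced, its denominator $Q_m$ attains its minimal admissible value ($1$ for $\sqrt{d}$, $2$ for $\omega_d$) exactly when $\alpha_m$ returns to $\sqrt{d}+a_0$, respectively $\omega_d + b_0 - 1$, which by the \emph{minimality} of the period happens precisely when the period divides $m$. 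Hence for $1 \leq n+1 \leq l-1$ the denominator $Q_{n+1}$ is non-minimal, forcing $\nu_n \geq 2$; this is where the minimality of $l$ genuinely enters.

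Finally, to see that $\xi_{l-1}$ is the \emph{fundamental} unit and not merely some unit, I would show every unit $u>1$ comes from a convergent. Writing $u = x + y\sqrt{d}$ with $N(u) = \pm 1$ gives $|x/y - \sqrt{d}| = 1/(uy)$, which is below $1/(2y^2)$ as soon as $u > 2y$; and $u > y\sqrt{d}$ makes this hold for all but finitely many $d$, the small cases being checked by hand exactly as in the proof of Lemma~\ref{lem_quotient_norm}. Proposition~\ref{prop_Hardy_convergent} then forces $x/y$ to be a convergent, so $u = \xi_m$ for some $m$. As the $\xi_n$ increase with $n$ and carry a unit exactly at $n \equiv l-1 \pmod{l}$ (by the core statement), the least unit exceeding $1$ is $\xi_{l-1}$, i.e. $\varepsilon_d = \xi_{l-1}$ comes from the $(l-1)$-th convergent; the argument for $\omega_d$ is identical with $\overline{\omega_d}$ replacing $-\sqrt{d}$. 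I expect the main obstacle to be the third step: pinning down the dictionary $\nu_n \leftrightarrow Q_{n+1}$ together with the claim that the minimal value of $Q_m$ occurs only at multiples of the period, since this is precisely where minimality of the period must be converted into a numerical inequality, and where the factor-of-two discrepancy between the $\sqrt{d}$ and $\omega_d$ normalizations must be tracked carefully.
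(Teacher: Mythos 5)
Your proposal is correct, and it shares the paper's outer skeleton---check small discriminants by hand, then use Lemma~\ref{lem_quotient_norm} to reduce both assertions to the claim that $\nu_i \geq 2$ for $0 \leq i \leq l-2$---but it implements that crux by a genuinely different mechanism. The paper never introduces the $P_n,Q_n$ apparatus: it conjugates $\xi_n$ directly, writing $\alpha_{n+1} = \frac{-\omega q_{n-1}+p_{n-1}}{\omega q_n - p_n}$ and expanding the numerator against $\overline{p_n - \omega q_n}$ to exhibit $\alpha_{n+1}$ as (an integer $+\,(-1)^{n+1}\omega$) divided by $(-1)^{n+1}\nu_n$, so that $\nu_n = 1$ forces $\alpha_{n+1} \equiv \omega \pmod 1$; the tails of the expansions then coincide and minimality of the period gives $l \mid n+1$ in one stroke, uniformly for both normalizations. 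Your route through reduced quadratic irrationals---$\nu_n = Q_{n+1}/Q_0$, with $Q_m$ minimal ($1$ resp.\ $2$) exactly when $\alpha_m$ equals the reduced endpoint $\sqrt d + a_0$ resp.\ $\omega_d + b_0 - 1$---is the classical Perron-style argument; it is heavier on bookkeeping (you rightly flag the parity tracking in the $\omega_d$ normalization, which does work out, since once the identity $N(\xi_n) = (-1)^{n+1}Q_{n+1}/2$ is in place, $Q_{n+1} = 2\nu_n$ is automatically even), but it also proves more. You establish $\nu_{l-1}=1$ explicitly by substituting $\alpha_l = \sqrt d + a_0$ into \eqref{eqn_total quotient to number} and collapsing via Proposition~\ref{prop_p q minus q p} to $p_{l-1}^2 - d\,q_{l-1}^2 = (-1)^l$, and you prove via Proposition~\ref{prop_Hardy_convergent} that every unit $>1$ comes from a convergent---two facts the paper's proof uses implicitly as standard (as written, it only establishes $\nu_i \geq 2$ for $i \leq l(\omega)-2$ and leaves the identification of $\varepsilon_d$ with $\xi_{l-1}$ to known theory). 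One pedantic point: for the $\sqrt d$ part with $d \equiv 1 \bmod 4$ allowed, the lemma's $\sqrt D$ must be read as $2\sqrt d$ (the paper's own parenthetical ``$\omega = \sqrt d$ is allowed'' covers this), which is how you in fact use it. In short, the paper's conjugation computation is shorter and avoids the $Q_n$ formalism; your version is longer but self-contained on exactly the points the paper outsources, and each step you sketch is sound.
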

This fact has been seen several times in the literature, but the author couldn't find a compact proof so we include an elementary one here.
\begin{proof}
For brevity, let $\omega$ be one of $\sqrt{d}$ and $\frac{1 + \sqrt{d}}{2}$. ($\omega = \sqrt{d}$ is allowed for $d \equiv 1$ mod $4$ too). Write $\omega = [a_0,a_1,a_2,\cdots]$ and let $l(\omega)$ be its period.

Referring to a table of continued fractions of $\sqrt{d}$ and $\frac{1+\sqrt{d}}{2}$ for small values of $d$, the assertion can be easily verified for $D < 16$. So we assume $D > 16$, in which case (by lemma~\ref{lem_quotient_norm}) it suffices to show that $\nu_i > 1$ for $0 \leq i \leq l(\omega) - 2$.

By (\ref{eqn_total quotient to number}) one has $\omega = \frac{\alpha_{n+1}p_n + p_{n-1}}{\alpha_{n+1}q_n + q_{n-1}}$. Recall (\ref{eqn_nu_n}), proposition~\ref{prop_p q minus q p} and write
\begin{align*}
\alpha_{n+1} &= \frac{-\omega q_{n-1} + p_{n-1}}{\omega q_n - p_n} = \frac{(\omega q_{n-1} - p_{n-1})(\overline{p_n - \omega q_n})}{N(\xi_n)}\\
&= \frac{-N(\omega) q_{n-1}q_n - p_{n-1}p_n + p_n q_{n-1}\omega + p_{n-1}q_n \overline{\omega}}{(-1)^{n+1}\nu_n}\\
&=\begin{cases}
\frac{\frac{d-1}{4} q_{n-1}q_n - p_{n-1}p_n + p_{n-1}q_n + (-1)^{n+1}\omega}{(-1)^{n+1}\nu_n} & \text{if $\omega = \frac{1+\sqrt{d}}{2}$}\\
\frac{d q_{n-1}q_n - p_{n-1}p_n + (-1)^{n+1}\sqrt{d}}{(-1)^{n+1}\nu_n} & \text{if $\omega = \sqrt{d}$}.
\end{cases}
\end{align*}
Therefore, if $\nu_n = 1$ we have $\alpha_{n+1} \equiv \omega$ mod $1$, i.e.,
\begin{equation*}
    \alpha_{n+1} = [a_{n+1}, a_1, a_2, a_3,\cdots]
\end{equation*}
which implies that $n+1$ is a multiple of $l(\omega)$. Hence $\nu_i \geq 2$ for $i \leq l(\omega) -2$.
\end{proof}

For a symmetric sequence $\{ a_1, \cdots, a_{n} \}$ of positive integers, let $M_0$ be the 2 by 2 identity matrix and

\begin{equation*}
    M_n = \begin{pmatrix} a_1 &1\\ 1 &0 \end{pmatrix} \cdots \begin{pmatrix} a_n &1\\ 1 &0 \end{pmatrix} \quad \text{for $n \geq 1$}.
\end{equation*}
An induction argument easily proves that
\begin{equation*}
    M_n = \begin{pmatrix} q_n &q_{n-1}\\ r_n &r_{n-1} \end{pmatrix},\quad n\geq 0.
\end{equation*}

Here $\{ a_1, \cdots, a_{n} \}$ is symmetric, so $M_n$ is a symmetric matrix, i.e., $q_{n-1} = r_n$. Therefore $|M_n| = q_n r_{n-1} - q^2_{n-1} - = (-1)^n \equiv 1$ (mod 2) and $q_n q_{n-1} r_{n-1} \equiv 0$ (mod 2). Put
\begin{equation*}
    f(a_1,\cdots,a_n; T) = q^2_n T^2 + A T + B,\quad\text{where}
\end{equation*}
\begin{gather*}
    A = 4 q_{n-1} + 2 (-1)^n q_n q_{n-1}r_{n-1}, \quad B =  q^2_{n-1} r^2_{n-1} + 4 (-1)^n r^2_{n-1}.
\end{gather*}

Observe that $f(a_1,\cdots,a_n; T) \equiv q_n^2 T^2 + q_{n-1}^2 r_{n-1}^2$ (mod $4$).

\begin{thm}[Corollary 1 and 1A in \cite{Halter_Koch}]\label{thm_Halter_Koch}
Let $\{ a_1, \cdots, a_{n} \}$ be a symmetric sequence of positive integers and $q_n$, $q_{n-1}$, $r_{n-1}$ as above. The followings are equivalent:
\begin{enumerate}
  \item $\frak{D}(a_1,\cdots,a_n) \neq \emptyset$
  \item Either $q_n \equiv 1$ (mod 2) or $q_n \equiv q_{n-1}r_{n-1} \equiv 0$ (mod 2).
\end{enumerate}
In these cases, $\frak{D}(a_1,\cdots,a_n)$ consists of all $d > 0$ of the form
\begin{equation*}
    d = \frac{1}{4}f(T)
\end{equation*}
where $T$ is any integer satisfying $q_n T + (-1)^n q_{n-1} r_{n-1} > 0$, $f(T) \equiv 0$ mod $4$.

Likewise, the followings are equivalent:
\begin{enumerate}
  \item $\frak{D}'(a_1,\cdots,a_n) \neq \emptyset$
  \item Either $q_n \equiv 1$ (mod 2) or $q_n \equiv 1 + q_{n-1}r_{n-1} \equiv 0$ (mod 2).
\end{enumerate}
In these cases, $\frak{D}'(a_1,\cdots,a_n)$ consists of all $d > 0$, $d \equiv 1$ (mod 4) of the form
\begin{equation*}
    d = f(T)
\end{equation*}
where $T$ is any integer satisfying $q_n T + 1 + (-1)^n q_{n-1} r_{n-1} > 0$, $f(T) \equiv 1$ mod $4$.
\end{thm}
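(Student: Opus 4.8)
The plan is to pass to the purely periodic reduced quadratic irrational attached to $\omega_d$ and to read $d$ off from the matrix $M_n$. Treating $\omega_d=\sqrt d$ first, (\ref{eqn_expansion of omega d}) gives $\sqrt d=[a_0,\overline{a_1,\dots,a_n,2a_0}]$, so $\gamma:=a_0+\sqrt d=[\overline{2a_0,a_1,\dots,a_n}]$ is purely periodic with period block $(2a_0,a_1,\dots,a_n)$. Setting $C=\left(\begin{smallmatrix}2a_0&1\\1&0\end{smallmatrix}\right)M_n=\left(\begin{smallmatrix}s&s'\\t&t'\end{smallmatrix}\right)$, the tail identity (\ref{eqn_total quotient to number}) applied to $\gamma=[2a_0,a_1,\dots,a_n,\gamma]$ shows that $\gamma$ is a root of $t\gamma^2+(t'-s)\gamma-s'=0$. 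Here the \emph{symmetry} of $\{a_1,\dots,a_n\}$ enters decisively: it gives $r_n=q_{n-1}$, whence the sum of the two roots equals $2a_0$ and $\gamma$ is forced into the shape $a_0+\sqrt d$ with $a_0\in\mathbb Z$. Reading off the constant coefficient then yields the key identity
\begin{equation*}
q_n d=q_n a_0^2+2a_0q_{n-1}+r_{n-1}.
\end{equation*}
For $\mathfrak D'$ one replaces $2a_0$ by $2a_0-1$ and $\gamma$ by $\omega_d+a_0-1=\tfrac{2a_0-1+\sqrt d}{2}$, obtaining $q_n d=q_n(2a_0-1)^2+4(2a_0-1)q_{n-1}+4r_{n-1}$.

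Next I would turn this identity into the quadratic progression. Using $\det M_n=(-1)^n$, that is $q_nr_{n-1}=q_{n-1}^2+(-1)^n$, multiplying by $4q_n$ and completing the square gives $4q_n^2 d=(2q_na_0+2q_{n-1})^2+4(-1)^n$. Introducing $T$ by $2a_0=q_nT+(-1)^nq_{n-1}r_{n-1}$ — so that the positivity hypothesis $q_nT+(-1)^nq_{n-1}r_{n-1}>0$ is just $a_0\ge1$ — a direct expansion, in which the coefficients $A,B$ reassemble exactly and the stray $4(-1)^n$ is absorbed through $q_nr_{n-1}=q_{n-1}^2+(-1)^n$, shows $4q_n^2 d=q_n^2 f(T)$, hence $d=\tfrac14 f(T)$. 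The primed case is identical after setting $2a_0-1=q_nT+(-1)^nq_{n-1}r_{n-1}$, where the same computation lands on $q_n^2d=q_n^2f(T)$, i.e. $d=f(T)$. For the forward direction I must also check $T\in\mathbb Z$: the key identity gives $q_n\mid 2a_0q_{n-1}+r_{n-1}$, and multiplying by $q_{n-1}$ and reducing modulo $q_n$ via $q_{n-1}^2\equiv(-1)^{n+1}$ yields $q_n\mid 2a_0-(-1)^nq_{n-1}r_{n-1}$, as needed.

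For the converse — that every admissible $T$ produces a genuine element of the set — I would reverse the construction. Given an integer $T$ with $q_nT+(-1)^nq_{n-1}r_{n-1}>0$ and $f(T)\equiv0\pmod4$, put $a_0=\tfrac12\big(q_nT+(-1)^nq_{n-1}r_{n-1}\big)$ and $d=\tfrac14 f(T)\in\mathbb Z_{>0}$. One sees $d$ is non-square directly from $4q_n^2 d-(2q_na_0+2q_{n-1})^2=4(-1)^n$: were $d=e^2$, the factorisation $(2q_ne-2q_na_0-2q_{n-1})(2q_ne+2q_na_0+2q_{n-1})=4(-1)^n$ forces a degenerate tiny case, excluded for $a_0\ge1$. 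The heart of the matter is to show $\gamma=a_0+\sqrt d$ is reduced, i.e. $0<d-a_0^2<2a_0+1$; the lower bound is immediate and the upper bound follows from the key identity together with $r_{n-1}<q_{n-1}\le q_n$, giving $d-a_0^2=(2a_0q_{n-1}+r_{n-1})/q_n<2a_0+1$. Reducedness yields pure periodicity by the criterion recalled from \cite{niven1991introduction}, and the matrix fixed-point relation then forces the continued fraction algorithm to return exactly the block $(2a_0,a_1,\dots,a_n)$, so $\sqrt d=[a_0,\overline{a_1,\dots,a_n,2a_0}]$ with $a_0=\lfloor\sqrt d\rfloor$; that is, $d\in\mathfrak D(a_1,\dots,a_n)$. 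The primed case runs the same way, using $(2a_0-1)^2\le d<(2a_0+1)^2$ and checking $d\equiv1\pmod4$.

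It remains to match the solvability of these conditions with the stated congruences, and this is where I expect the main difficulty to lie: not in any single computation, but in the parity bookkeeping of the four sub-cases. The driving observation is that $q_nr_{n-1}-q_{n-1}^2=(-1)^n$ is odd, so $q_n$ odd forces exactly one of $q_{n-1},r_{n-1}$ to be even (hence $q_{n-1}r_{n-1}$ even), while $q_n$ even forces $q_{n-1}$ odd. Combined with $f\equiv q_n^2T^2+q_{n-1}^2r_{n-1}^2\pmod4$, one verifies that $f(T)\equiv0\pmod4$ is solvable in $T$ precisely when $q_n\equiv1$ or $q_n\equiv q_{n-1}r_{n-1}\equiv0\pmod2$, which is exactly condition (2) and is equivalent to the existence of an integral $a_0$; this establishes $(1)\Leftrightarrow(2)$ for $\mathfrak D$. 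The parallel analysis with the requirement $f(T)\equiv1\pmod4$ produces the primed congruences. The genuinely delicate steps are therefore the reducedness-and-recovery argument of the third paragraph and the case analysis here; the identity $d=\tfrac14 f(T)$ itself, though tedious to expand, is routine once the substitution for $2a_0$ is fixed.
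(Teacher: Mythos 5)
The paper does not actually prove this theorem: it is imported verbatim as Corollary 1 and 1A of \cite{Halter_Koch}, so there is no internal proof to compare against. Judged on its own, your argument is correct in substance and is essentially a reconstruction of the standard Friesen/Halter-Koch proof: the fixed-point relation $\gamma=[2a_0,a_1,\dots,a_n,\gamma]$ via the matrix $C=\left(\begin{smallmatrix}2a_0&1\\1&0\end{smallmatrix}\right)M_n$, with symmetry ($r_n=q_{n-1}$) forcing the trace condition; the key identity $q_n d=q_na_0^2+2a_0q_{n-1}+r_{n-1}$; completing the square through $q_nr_{n-1}-q_{n-1}^2=(-1)^n$ to get $4q_n^2d=(2q_na_0+2q_{n-1})^2+4(-1)^n$ and hence $d=\tfrac14f(T)$ under $2a_0=q_nT+(-1)^nq_{n-1}r_{n-1}$ (I verified the constant term reassembles to $q_n^2B$ exactly as you claim); the converse via reducedness of $a_0+\sqrt d$ and uniqueness of continued fraction expansions; and the parity bookkeeping via the paper's observation $f(T)\equiv(q_nT)^2+(q_{n-1}r_{n-1})^2\pmod 4$ together with the oddness of $\det M_n$, which does yield exactly conditions (2) and (2$'$). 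Note also that $f(T)\equiv0\pmod4$ forces both $q_nT$ and $q_{n-1}r_{n-1}$ even, which is what makes your $a_0=\tfrac12(q_nT+(-1)^nq_{n-1}r_{n-1})$ integral in the converse; you use this implicitly and it is correct.

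Two details deserve one extra line each. First, in the primed forward direction your phrase ``identical'' hides a real (though easily fixed) point: the primed key identity $q_nd=q_n(2a_0-1)^2+4\bigl((2a_0-1)q_{n-1}+r_{n-1}\bigr)$ only gives $q_n\mid 4\bigl((2a_0-1)q_{n-1}+r_{n-1}\bigr)$, which is weaker than what your multiply-by-$q_{n-1}$ trick needs when $q_n$ is even. The fix: since $2a_0-1$ is odd and $d\equiv1\pmod4$, one has $\tfrac14\bigl(d-(2a_0-1)^2\bigr)\in\mathbb Z$, and the identity reads $(2a_0-1)q_{n-1}+r_{n-1}=q_n\cdot\tfrac14\bigl(d-(2a_0-1)^2\bigr)$, so $q_n$ does divide $(2a_0-1)q_{n-1}+r_{n-1}$ after all, and integrality of $T$ follows as in the unprimed case. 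Second, the chain $r_{n-1}<q_{n-1}\le q_n$ in your reducedness bound has degenerate equality cases (e.g.\ $n=2$, $a_1=a_2=1$ gives $r_1=q_1$; $n=1$, $a_1=1$ gives $q_1=q_0$), but the strict inequality actually needed, $2a_0q_{n-1}+r_{n-1}<(2a_0+1)q_n$, survives in every such case, so this is a slip of wording rather than of substance.
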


From now on we always assume $(p,q)=1$ whenever we write $p/q$ to denote a rational number. Recall that every irrational number has a unique continued fraction expansion. For convenience, write $x = [a_0,a_1,\cdots,a_m,\ast]$ if $[a_0,a_1,\cdots,a_m]$ is a convergent to $x$. We use the convention $[a_0,a_1,\cdots,a_m] = [a_0,a_1,\cdots,a_m,\infty]$. It is a simple observation that the set of positive real numbers is partitioned by the predecessors, i.e., for any positive integers $a_0,a_1,\cdots,a_m$ the set
\begin{equation*}
\{ x \in \mathbb{R} \mid x>0,\;\;x = [a_0,a_1,a_2,\cdots,a_m,\ast]\}
\end{equation*}
is a closed interval. The following proposition quantifies these intervals in view of $\omega_d$.

\begin{prop}\label{Portion with fixed Predecessors}
Let $a_1,a_2,\cdots,a_m$ be positive integers and $f(N)$ the number of non-square integers $d$ between 1 and $N$ such that
\begin{equation*}
\sqrt{d} = [a_0,a_1,a_2,\cdots,a_m,\ast]
\end{equation*}
for some $a_0$. Similarly let $f^{(1)}(N)$ be the number of non-square integers $d$ between 1 and $N$ which are congruent to 1 modulo 4 such that
\begin{equation*}
\omega_d = [a_0,a_1,a_2,\cdots,a_m,\ast]
\end{equation*}
for some $a_0$. Then
\begin{equation*}
    \lim_{N \rightarrow \infty} \frac{f(N)}{N} = \lim_{N \rightarrow \infty} \frac{4 f^{(1)}(N)}{N} = \frac{1}{q_m (q_m + q_{m-1})}.
\end{equation*}
\end{prop}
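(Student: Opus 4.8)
The plan is to translate the combinatorial condition on the partial quotients into a geometric condition on $\sqrt d$ (resp.\ $\omega_d$), and then reduce the count to a lattice‑point problem under the parabola $x\mapsto x^2$. Fix $a_1,\dots,a_m$. For any integer $a_0\ge 1$, an irrational $x>0$ has continued fraction beginning $[a_0,a_1,\dots,a_m]$ exactly when its $(m+1)$-th total quotient $\alpha_{m+1}$ is finite and $>1$; by (\ref{eqn_total quotient to number}), as $\alpha_{m+1}$ decreases from $\infty$ to $1$ the value $x=\frac{\alpha_{m+1}p_m+p_{m-1}}{\alpha_{m+1}q_m+q_{m-1}}$ moves monotonically between $\frac{p_m}{q_m}$ and $\frac{p_m+p_{m-1}}{q_m+q_{m-1}}$. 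Since $a_0=\lfloor x\rfloor$ plays no role in $q_m,q_{m-1}$, the admissible $x$ form a translate $a_0+J$, where $J\subset(0,1)$ is a fixed interval with endpoints $\beta<\gamma$ equal to $r_m/q_m$ and $\frac{r_m+r_{m-1}}{q_m+q_{m-1}}$.

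By Proposition \ref{prop_p q minus q p}, $|r_m q_{m-1}-r_{m-1}q_m|=1$, so the length of $J$ is $\gamma-\beta=\frac{1}{q_m(q_m+q_{m-1})}$. Because $J\subset(0,1)$, the translates $I_{a_0}:=a_0+J$ are pairwise disjoint and lie in successive unit intervals, and the uniqueness of the continued fraction expansion of an irrational guarantees that a non-square $d$ contributes to $f(N)$ precisely when $\sqrt d\in I_{a_0}$ for exactly one $a_0$. The endpoints of $J$ are rational, hence never of the form $\sqrt d$, so open/closed makes no difference to the count.

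It remains to count integers. For $f$, the condition $\sqrt d\in[a_0+\beta,a_0+\gamma]$ is equivalent to $d\in\bigl[(a_0+\beta)^2,(a_0+\gamma)^2\bigr]$, an interval containing $(2a_0+\beta+\gamma)(\gamma-\beta)+O(1)$ integers. Discarding the $O(\sqrt N)$ perfect squares and summing over $a_0\le\sqrt N$ gives $f(N)=(\gamma-\beta)\sum_{a_0\le\sqrt N}2a_0+O(\sqrt N)=(\gamma-\beta)N+O(\sqrt N)$, so $f(N)/N\to\frac{1}{q_m(q_m+q_{m-1})}$. For $f^{(1)}$, the condition $\omega_d=\frac{1+\sqrt d}{2}\in[a_0+\beta,a_0+\gamma]$ becomes $d\in\bigl[(2a_0-1+2\beta)^2,(2a_0-1+2\gamma)^2\bigr]$, an interval of length $\approx 8a_0(\gamma-\beta)$, from which one keeps only the class $d\equiv1\pmod 4$ (a quarter of the integers, up to $O(1)$). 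Since $a_0\le\omega_d\le\frac{1+\sqrt N}{2}$, the summation runs to $\approx\sqrt N/2$ and produces $f^{(1)}(N)=\tfrac14(\gamma-\beta)N+O(\sqrt N)$, i.e.\ $4f^{(1)}(N)/N\to\frac{1}{q_m(q_m+q_{m-1})}$.

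The geometric description of $J$ and the summation are routine; the care lies entirely in the error terms. I would check that replacing each lattice count by the interval length costs only $O(1)$ per value of $a_0$, hence $O(\sqrt N)$ in total over the $O(\sqrt N)$ relevant $a_0$; that the single interval straddling $d\approx N$ contributes $O(\sqrt N)$; and, for $f^{(1)}$, that the class $d\equiv1\pmod 4$ is equidistributed in each interval with the same $O(1)$ slack. The one conceptual point to nail down is that the correspondence between the prefix $a_1,\dots,a_m$ and membership $\sqrt d\in a_0+J$ is exact for every non-square $d$, which is immediate from uniqueness of the expansion; after that both densities are forced by the single quantity $|J|=\frac{1}{q_m(q_m+q_{m-1})}$.
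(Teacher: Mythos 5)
Your proposal is correct and takes essentially the same route as the paper's (sketched) proof: both identify the admissible $\sqrt{d}$ (resp.\ $\omega_d$) with translates $a_0+J$ of a fixed interval of length $|J|=\frac{1}{q_m(q_m+q_{m-1})}$ coming from the difference of the convergents $[a_0,a_1,\dots,a_m]$ and $[a_0,a_1,\dots,a_m+1]$, and then count roughly $2a_0|J|$ integers (resp.\ a quarter of roughly $8a_0|J|$ integers, extracting the class $d\equiv 1 \bmod 4$) for each $a_0\ll\sqrt{N}$. Your write-up simply replaces the paper's linear-approximation heuristic for $y=\sqrt{x}$ with explicit squaring of the interval endpoints and $O(\sqrt{N})$ error bookkeeping, which is a legitimate rigorization of the same argument rather than a different one.
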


\begin{proof}[Sketch of proof]
This is almost trivial. Consider the numbers in the interval $(n^2,(n+1)^2)$ where $n$ is large. For $x$ in the range $n^2 < x < n^2 + 2n + 1$, the curve $y = \sqrt{x}$ may be approximated by a straight line of slope $\frac{1}{2n}$. The difference between $[n,a_1,a_2,\cdots,a_m]=[n,a_1,a_2,\cdots,a_m,\infty]$ and $[n,a_1,a_2,\cdots,a_m + 1]$ is $\frac{1}{q_m (q_m + q_{m-1})}$, so there are approximately $\frac{2n}{q_m (q_m + q_{m-1})}$ (non-square) integers between $n^2$ and $(n+1)^2$ that are counted by $f(N)$ for $N > (n+1)^2$. Similarly, for $(n-1)^2 < x < (n+1)^2$ consider the curve $y = \frac{1}{2}(1 + \sqrt{x})$ which is close to a straight line of slope $\frac{1}{4n}$. Extracting integers congruent to 1 modulo 4, we get the result.
\end{proof}

\section{The Attached Intervals}\label{sec_Attached intervals}

Let $p/q$ be any rational number. Note that
\begin{equation*}
\left| \frac{p}{q} - \omega_d \right| = \begin{cases}
\frac{\nu}{(p+q (\omega_d-1))q}\;\; &\text{ if $d \equiv 1$ mod 4}\\
\frac{\nu}{(p+q \omega_d)q}\;\; &\text{ otherwise}
\end{cases}
\end{equation*}
where $\nu = |N(p - q \omega_d)|$. So we can interpret the norm of a quadratic integer $p - q \omega_d$ as a measure of how successful the approximation of $\omega_d$ by $\frac{p}{q}$ is. Proposition~\ref{prop_Hardy_convergent} shows that when
\begin{equation*}
\left| N(p - q \omega_d) \right| < \begin{cases}
\frac{p/q + \omega_d - 1}{2}\;\; &\text{ if $d \equiv 1$ mod 4}\\
\frac{p/q + \omega_d}{2}\;\; &\text{ otherwise},
\end{cases}
\end{equation*}
$p/q$ is a convergent to $\omega_d$. Lemma~\ref{lem_quotient_norm} also shows that $\xi_n$ becomes a quadratic unit if and only if the $(n+1)$-th convergent becomes as large as possible, namely $\alpha_{n+1} = \sqrt{D} -O(1)$. By (\ref{eqn_difference of convergents}) this means that $\omega_d = [a_0,\cdots,a_n,\alpha_{n+1}]$ is particularly close to $p_n/q_n$; in other words, $\nu_n = 1$ if the approximation of $\omega_d$ by $p_n/q_n$ is `sufficiently good'.

\begin{rem}\label{rem_period and symmetry}
By proposition~\ref{prop_partial quotients of omega d}, such thing happens if and only if $ n + 1 \equiv 0$ mod $l(\omega_d)$. Recall that $\omega_d = [a_0,\overline{a_1,\cdots,a_{l-1},a_l}]$ where $\{a_1,\cdots,a_{l-1}\}$ is symmetric. It follows that if a rational number $p/q$ satisfies $|N(p - q\omega_d)| = 1$ for some $d$, then $p/q - \lfloor p/q \rfloor = [0,a_1,\cdots,a_n]$ for some symmetric sequence $\{a_1,\cdots,a_n\}$.
\end{rem}

Based on the above context, for each rational number $p/q$ we assign tiny intervals $I^{i}_{p/q}$, $i=0,1$ that consist of points $x \in \mathbb{R}$ where $\sqrt{x}$ or $(1+\sqrt{x})/2$ is especially close to $p/q$. More specifically, we want these intervals to satisfy following property: whenever a non-square integer $d$ falls into that interval, the quality of approximation of $\omega_d$ by $p/q$ is sufficiently good and hence $p+q \omega_d$ or $p - q + q \omega_d$ becomes a quadratic unit. Explicitly, we build the intervals for a fixed $p/q \neq 1$ as follows.

Recall that there exists a unique sequence $\{ a_0,a_1,\cdots,a_m \}$ such that $p/q = [a_0,a_1,\cdots,$ $a_m,1] = [a_0,\cdots,a_{m-1}, a_m + 1]$ by proposition~\ref{prop_two ways of continued fractions}. From (\ref{eqn_difference of convergents}), for any positive number $\lambda$ one has
\begin{equation*}
\begin{cases}
[a_0,\cdots,a_m + 1, \lambda] < \frac{p}{q} < [a_0,\cdots,a_m,1,\lambda] \;\; &\text{ if $m$ is odd}\\
[a_0,\cdots,a_m,1, \lambda] < \frac{p}{q} < [a_0,\cdots,a_m + 1,\lambda] \;\; &\text{ if $m$ is even}.
\end{cases}
\end{equation*}

Since $\nu_n$ is an integer, by lemma~\ref{lem_quotient_norm} a total quotient that appears in the continued fraction expansion of $\omega_d$ cannot assume any values between $\sqrt{D}/2 + O(1)$ and $\sqrt{D} + O(1)$. Observe that $\sqrt{D}/2 = \omega_d + O(1) = a_0 +O(1)$. Unless $D$ is too small, therefore, we can say that $\nu_n=1$ if and only if $\alpha_{n+1} > \frac{4}{3} a_0 - O(1)$.
Let
\begin{gather*}
    A = A(p/q) = [a_0,\cdots,a_m +1,\frac{4}{3}a_0 -\frac{q_{m-1}}{q}],\\
    B = B(p/q) = [a_0,\cdots,a_m,1,\frac{4}{3}a_0 - \frac{q_m}{q}]
\end{gather*}
where $q_{m-1}$ and $q_m$ are the denominators of $[a_0,a_1,\cdots,a_{m-1}]$ and $[a_0,a_1,\cdots,$ $a_m]$ respectively. We take the intervals $I^{i}_{p/q}$ for $i = 0,1$ as
\begin{equation*}
I^0_{p/q} = \begin{cases}
\left(A^2,\; B^2 \right)\;\; &\text{ if $m$ is odd}\\
\left(B^2,\; A^2 \right)\;\; &\text{ if $m$ is even}
\end{cases}
\end{equation*}
and
\begin{equation*}
I^1_{p/q} = \begin{cases}
\left((2A-1)^2,\; (2B-1)^2 \right)\;\; &\text{ if $m$ is odd}\\
\left((2B-1)^2,\; (2A-1)^2 \right)\;\; &\text{ if $m$ is even.}
\end{cases}
\end{equation*}

We write
\begin{gather*}
    I^{0}_{p/q}\setminus \{ p^2/q^2 \} = I^{0,-}_{p/q} \bigcup I^{0,+}_{p/q},\quad
 I^{1}_{p/q}\setminus \{ (2p/q - 1)^2 \} = I^{1,-}_{p/q} \bigcup I^{1,+}_{p/q}
\end{gather*}

where $I^{i,-}_{p/q}$ (resp. $I^{i,+}_{p/q}$) is the left (resp. right) connected part of $I^{0}_{p/q}\setminus \{ p^2/q^2 \}$ or $I^{1}_{p/q}\setminus \{ (2p/q - 1)^2 \}$.
Denote the fractional part of $p/q$ by $\{p/q\} = k/q$. When $q\neq 1$, let $k^{-1}$ be the multiplicative inverse of $k$ modulo $q$.

\begin{thm}\label{thm_Interval contains an integer iff}
Let $p/q \geq 4$.
\begin{enumerate}
  \item $I^{0,+}_{p/q}$ contains an integer $d$ if and only if
  \begin{equation*}
  k^2 \equiv -1 \text{ (mod $q$)},\;2 a_0 \equiv k  \frac{1 +k^2}{q}  \text{ (mod $q$)}
  \end{equation*}

  \item $I^{0,-}_{p/q}$ contains an integer $d$ if and only if
  \begin{equation*}
  k^2 \equiv 1 \text{ (mod $q$)},\;2 a_0 \equiv k  \frac{1 -k^2}{q}  \text{ (mod $q$)}
  \end{equation*}

  \item $I^{1,+}_{p/q}$ contains an integer $d \equiv 1$ mod 4 if and only if
  \begin{equation*}
  k^2 \equiv -1 \text{ (mod $q$)},\;2 a_0 \equiv 1 + k  \frac{1 +k^2}{q}  \text{ (mod $q$)}
  \end{equation*}

  \item $I^{1,-}_{p/q}$ contains an integer $d \equiv 1$ mod 4 if and only if
  \begin{equation*}
  k^2 \equiv 1 \text{ (mod $q$)},\;2 a_0 \equiv 1 + k  \frac{1 -k^2}{q}  \text{ (mod $q$)}.
  \end{equation*}
\end{enumerate}

In each case the integer $d$ is given by
\begin{equation}\label{eqn_d contained in the intervals}
d =\begin{cases}
a_0^2 + \frac{2k}{q}a_0 + \frac{k^2 + 1}{q^2} & \text{in case (1)}\\
a_0^2 + \frac{2k}{q}a_0 + \frac{k^2 - 1}{q^2} & \text{in case (2)}\\
(2a_0 -1)^2 + \frac{4k}{q}(2 a_0 - 1) + \frac{4k^2 + 4}{q^2} & \text{in case (3)}\\
(2a_0 -1)^2 + \frac{4k}{q}(2 a_0 - 1) + \frac{4k^2 - 4}{q^2} & \text{in case (4)}
\end{cases}
\end{equation}
and
\begin{equation}\label{eqn_norm of p minus q sqrt_d}
N(p-q\sqrt{d}) = \begin{cases} -1 & \text{in case (1)}\\
1 & \text{in case (2)}
\end{cases}
\end{equation}

\begin{equation}\label{eqn_norm of p minus q omega_d}
N\left(p-q\frac{1+\sqrt{d}}{2}\right) = \begin{cases} -1 & \text{in case (3)}\\
1 & \text{in case (4)}
\end{cases}
\end{equation}
\end{thm}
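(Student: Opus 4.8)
The plan is to prove, case by case, that an integer $d$ lies in the relevant half-interval precisely when $p/q$ is a convergent to the appropriate irrational ($\sqrt{d}$ for $I^0$, $\tfrac{1+\sqrt d}{2}$ for $I^1$) with $\nu_n=1$ and the correct over/under-estimate sign; this forces a single norm equation for $p-q\omega_d$, and the integrality of its solution is exactly the stated congruence. I will carry out case (1) and indicate how (2)--(4) follow. First I would build the dictionary between the interval and the trailing total quotient. Writing $p/q=[a_0,\dots,a_m,1]=[a_0,\dots,a_m+1]$ and applying (\ref{eqn_difference of convergents}) with proposition~\ref{prop_p q minus q p} gives, with $\lambda_A=\tfrac43 a_0-q_{m-1}/q$ and $\lambda_B=\tfrac43 a_0-q_m/q$,
\[
A-\frac{p}{q}=\frac{(-1)^m}{q(\lambda_A q+q_{m-1})},\qquad B-\frac{p}{q}=\frac{(-1)^{m+1}}{q(\lambda_B q+q_m)} .
\]
Thus $A$ and $B$ straddle $p/q$, so $p/q$ is interior to $I^0_{p/q}$, and the right half $I^{0,+}_{p/q}$ is exactly the image of $(\lambda,\infty)$ under $\alpha\mapsto[a_0,\dots,\text{even representation of }p/q,\alpha]$, where the even representation is $[a_0,\dots,a_m+1]$ if $m$ is even and $[a_0,\dots,a_m,1]$ if $m$ is odd, and $\lambda$ is the matching $\lambda_A$ or $\lambda_B$. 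For a non-square $d$ with $\sqrt d$ in this range, $\alpha>\lambda>1$ makes $p/q$ precisely the even-indexed convergent $p_n/q_n$ of $\sqrt d$ (even index because even-indexed convergents underestimate, by theorem~163 of Hardy), with $q_n=q$ and $\alpha_{n+1}=\alpha$; here $q_{n-1}$ equals $q_{m-1}$ or $q_m$ according to the parity of $m$, which is exactly the denominator subtracted in $\lambda$.

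Next I would use lemma~\ref{lem_quotient_norm} to convert the size of $\alpha_{n+1}$ into the value of $\nu_n$, working throughout with $\omega=\sqrt d$ and $\sqrt D=2\sqrt d$ (the computation at the end of lemma~\ref{lem_quotient_norm} applies to $\sqrt d$ for every $d$). Since $\nu_n$ is a positive integer and $\alpha_{n+1}=\sqrt D/\nu_n-q_{n-1}/q_n+\delta_n$ with $\delta_n$ negligible, and since $\lambda+q_{n-1}/q=\tfrac43 a_0$ sits strictly between $\sqrt D/2$ and $\sqrt D$, the inequality $\alpha_{n+1}>\lambda$ holds iff $\nu_n=1$. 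This is where $p/q\ge 4$ is used: it gives $a_0\ge 4$, hence $\tfrac43 a_0>\sqrt d\ge \sqrt D/2$, separating $\nu_n=1$ (where $\alpha_{n+1}\approx\sqrt D>\tfrac43 a_0$) from $\nu_n\ge 2$ (where $\alpha_{n+1}\le \sqrt D/2+O(1)<\tfrac43 a_0$). Consequently, for an integer $d$ with $\sqrt d>p/q$,
\[
d\in I^{0,+}_{p/q}\ \Longleftrightarrow\ \nu_n=1\ \Longleftrightarrow\ |p^2-q^2 d|=1\ \Longleftrightarrow\ p^2-q^2 d=-1,
\]
the final sign forced by $\sqrt d>p/q$. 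So the only candidate is $d=(p^2+1)/q^2$, which becomes (\ref{eqn_d contained in the intervals}) upon substituting $p=a_0 q+k$, and $N(p-q\sqrt d)=-1$ as in (\ref{eqn_norm of p minus q sqrt_d}).

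For the converse I would start from the congruences, put $d=(p^2+1)/q^2$, and verify genuine membership. Since $p=a_0q+k$ gives $p^2+1\equiv q\bigl(2a_0k+(k^2+1)/q\bigr)\pmod{q^2}$, the condition $q^2\mid p^2+1$ is equivalent to $k^2\equiv-1\pmod q$ together with $2a_0k\equiv-(k^2+1)/q\pmod q$; multiplying by $k^{-1}\equiv-k\pmod q$ turns the second into $2a_0\equiv k(1+k^2)/q\pmod q$, exactly the stated pair. Given these, $d\in\mathbb Z$ with $N(p-q\sqrt d)=-1$, so $|p/q-\sqrt d|=\bigl(q^2(p/q+\sqrt d)\bigr)^{-1}<1/(2q^2)$ and proposition~\ref{prop_Hardy_convergent} makes $p/q$ a convergent to $\sqrt d$; being an underestimate it occurs at an even index with $\nu_n=1$, and the dictionary above places $d$ in $I^{0,+}_{p/q}$. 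Case (2) is identical with the odd representation, the overestimate sign $p^2-q^2d=+1$, and $k^{-1}\equiv+k$ coming from $k^2\equiv1$; cases (3)--(4) replace $\sqrt d$ by $2\omega_d-1$, so that $N(p-q\omega_d)=\bigl((2p-q)^2-q^2d\bigr)/4$ and $2p-q=(2a_0-1)q+2k$ yields the remaining formulas and congruences.

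The main obstacle is the threshold step: making $\alpha_{n+1}>\lambda\Leftrightarrow\nu_n=1$ rigorous requires bounding $\delta_n$ of lemma~\ref{lem_quotient_norm} uniformly and checking that $a_0=\lfloor p/q\rfloor$ genuinely coincides with $\lfloor\omega_d\rfloor$ for every $d$ in the (extremely short) interval, so that $\tfrac43 a_0$ is the correct gap marker pinning the endpoints to $\nu_n=1$ and excluding spurious integers; this is the heart of the argument and leans on proposition~\ref{prop_partial quotients of omega d}. A secondary nuisance is the parity bookkeeping -- tracking which representation of $p/q$ matches the convergent's index, hence which of $q_{m-1},q_m$ enters $\lambda$ -- and, in cases (3)--(4), reconciling the extra powers of $2$ hidden in $(2p-q)^2\pm4$ with the constraint $d\equiv1\pmod 4$ alongside the congruence modulo $q$.
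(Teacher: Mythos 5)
Your proposal is correct, but it proves the theorem by a genuinely different route from the paper. The paper's own proof uses no continued-fraction machinery at all: after disposing of $q=1$ by hand, it computes from (\ref{eqn_difference of convergents}) that $A = p/q + \frac{3}{4a_0q^2}$ and $B = p/q - \frac{3}{4a_0q^2}$, deduces that $|I^{0,\pm}_{p/q}|$ lies strictly between $1/q^2$ and $2/q^2$ (and that $\frac14 I^{1,\pm}_{p/q}-\frac14$ likewise has length strictly between $1/q^2$ and $2/q^2$), and concludes that each punctured half-interval contains an admissible integer if and only if $q^2$ divides $p^2+1$, $p^2-1$, $p^2-pq+1$, $p^2-pq-1$ respectively; the stated congruences then fall out of substituting $p=a_0q+k$, and the norm identities (\ref{eqn_norm of p minus q sqrt_d}), (\ref{eqn_norm of p minus q omega_d}) are verified by direct substitution only at the very end. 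You instead promote the paper's motivating heuristic (the discussion preceding the theorem) into the actual proof: membership in a half-interval is translated, via the two representations of $p/q$ and the M\"obius dictionary, into the trailing total quotient exceeding the threshold $\frac43 a_0 - q_{n-1}/q_n$, which by lemma~\ref{lem_quotient_norm} is equivalent to $\nu_n=1$, hence to the norm equation, hence to $d=(p^2\pm1)/q^2$ (resp. $d = 1 + 4(p^2-pq\pm1)/q^2$); the converse runs through proposition~\ref{prop_Hardy_convergent}. I checked your threshold step: with $a_0\ge 4$ and $\sqrt d$ (or $\omega_d$) within $O(1/q^2)$ of $p/q$, one has $\sqrt D/2 + |\delta_n| < \frac43 a_0 < \sqrt D - O(1)$ in all four cases, so the dichotomy $\nu_n=1$ versus $\nu_n\ge 2$ does separate exactly at your $\lambda$, and your parity bookkeeping (which of $q_{m-1},q_m$ enters, and which endpoint bounds $I^{i,+}$) matches the definitions of $A$ and $B$. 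What each approach buys: the paper's argument is shorter, purely arithmetic, and needs no irrationality hypothesis; yours is heavier but obtains the convergent property and the norm equations as structural intermediate steps rather than terminal verifications, which is exactly the content that remark~\ref{rem_period and symmetry} and proposition~\ref{prop_y_x and symmetric sequence} exploit later.

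Two repairs you should make explicit. First, your forward direction applies the dictionary and lemma~\ref{lem_quotient_norm} to the expansion of $\sqrt d$ or $\omega_d$, which presupposes $d$ non-square; squares must be excluded separately, e.g.\ if $e^2\in I^{0,+}_{p/q}$ then $0 < eq - p < \frac{3}{4a_0 q} < 1$, impossible for integers (a similar one-line estimate handles $I^1$), and in the converse non-squareness follows from the norm equation itself. Second, you never treat $q=1$, where $k=0$ and the inverse $k^{-1}\equiv\pm k$ manipulations degenerate (the congruences mod $q$ are vacuous there); the paper verifies this case directly, finding the integers $p^2\pm1$ and $(2p-1)^2\pm4$. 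Finally, in cases (3)--(4) note that ``$d\in\mathbb Z$ and $d\equiv 1$ mod $4$'' is equivalent to $q^2\mid p^2-pq\pm1$, not merely to $q^2\mid (2p-q)^2\pm4$; for even $q$ these differ, and the ``powers of $2$'' nuisance you flagged is resolved precisely by working with $p^2-pq\pm1$, as your congruence derivation implicitly does.
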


\begin{proof}

When $q = 1$ we have $p/q = [p]=[p-1,1]$, so $m=0$, $a_0 = p-1$, $q_0 = 1$, $q_{-1} = 0$. Since $m$ is even, $I^{0,-}_{p/q} = (B^2,p^2/q^2)$ and $I^{0,+}_{p/q} = (p^2/q^2, A^2)$. In this case
\begin{align*}
    |I^{0,+}_{p/q}| &= \left[p,\frac{4}{3}(p-1)\right]^2 - p^2 = \left( p + \frac{3}{4(p-1)} \right)^2 - p^2 \\
&= \frac{3}{4(p-1)}\left( 2p + \frac{3}{4(p-1)} \right) \\
&= \frac{3}{2} + \frac{3}{2(p-1)} + \frac{9}{16(p-1)^2}
\end{align*}
which is less than 2 for $p \geq 5$. Similarly,
\begin{align*}
|I^{0,-}_{p/q}| &= p^2 - \left[p-1,1,\frac{4}{3}(p-1)-1\right]^2 = p^2 - \left( p-1 + \frac{1}{1 + \frac{3}{4p-7}} \right)^2\\
&= \frac{3}{2} + \frac{3}{2(p-1)} - \frac{9}{16(p-1)^2} < 2.
\end{align*}
The integers contained in $I^{0}_{p/q} \setminus\{ (p/q)^2 \}$ are therefore $p^2 \pm 1$. In the same manner $|I^{1,+}_{p/q}| = 6 + \frac{3}{p-1} + \frac{9}{4(p-1)^2}$, $|I^{1,-}_{p/q}| = 6 + \frac{3}{p-1} - \frac{9}{4(p-1)^2}$ which are less than 8, and the integers $\equiv 1$ mod 4 contained in $I^{1}_{p/q} \setminus \{ (2p/q -1)^2 \}$ are $(2p-1)^2 \pm 4$. All the assertions become trivial in this case, so we assume $q \neq 1$ and write $p/q = a_0 + k/q$.

Put
\begin{gather*}
    \alpha_1 = [a_1,\cdots,a_m + 1],\quad \beta_1 = \left[a_1,\cdots,a_m+1,\frac{4}{3}a_0 -\frac{q_{m-1}}{q}\right],\\
    \gamma_1 = \left[a_1,\cdots,a_m,1,\frac{4}{3}a_0 - \frac{q_m}{q}\right]
\end{gather*}
so that $p/q = a_0 + 1/\alpha_1$, $A = a_0 + 1/\beta_1$ and $B = a_0 + 1/\gamma_1$.

Assume $m$ is even. By (\ref{eqn_difference of convergents})
\begin{equation*}
    A = \frac{p}{q} + \frac{3}{4 a_0 q^2},\quad B = \frac{p}{q} - \frac{3}{4 a_0 q^2}
\end{equation*}
and
\begin{equation*}
    A^2 - \frac{p^2}{q^2} = \left( A + \frac{p}{q} \right) \frac{3}{4 a_0 q^2} = \left( 2 a_0 + \frac{1}{\alpha_1} +\frac{1}{\beta_1} \right) \frac{3}{4 a_0 q^2} = \frac{3}{2 q^2} \left( 1 + \frac{\alpha_1 + \beta_1}{2 a_0 \alpha_1 \beta_1} \right),
\end{equation*}
\begin{equation*}
    \frac{p^2}{q^2} - B^2 = \left( \frac{p}{q} + B \right) \frac{3}{4 a_0 q^2} = \left( 2 a_0 + \frac{1}{\alpha_1} +\frac{1}{\gamma_1} \right) \frac{3}{4 a_0 q^2} = \frac{3}{2 q^2} \left( 1 + \frac{\alpha_1 + \gamma_1}{2 a_0 \alpha_1 \gamma_1} \right).
\end{equation*}
Here $\alpha_1, \beta_1, \gamma_1 \geq 1$, so
\begin{gather*}
    1 < 1 + \frac{\alpha_1 + \beta_1}{2 a_0 \alpha_1 \beta_1} \leq 1 + \frac{1}{a_0},\qquad 1 < 1 + \frac{\alpha_1 + \gamma_1}{2 a_0 \alpha_1 \gamma_1}  \leq 1 + \frac{1}{a_0}.
\end{gather*}
Since $a_0 > 3$,
\begin{equation*}
    \frac{3}{2} \left( 1 + \frac{1}{a_0} \right) < 2
\end{equation*}
and the lengths of the intervals $I^{0,+}_{p/q}$, $I^{0,-}_{p/q}$ are between $1/q^2$ and $2/q^2$. It follows that
\begin{align*}
& I^{0,+}_{p/q} \bigcap \mathbb{Z} \neq \emptyset\\
&\Leftrightarrow \left\lceil \frac{p^2}{q^2} \right\rceil = \frac{p^2+1}{q^2}\\
&\Leftrightarrow \left( a_0 + \frac{k}{q} \right)^2 \equiv -\frac{1}{q^2} \text{ (mod 1)}\\
&\Leftrightarrow 2 a_0 \frac{k}{q} \equiv -\frac{k^2}{q^2} - \frac{1}{q^2} \text{ (mod 1)}\\
&\Leftrightarrow 2 a_0 k \equiv -\frac{k^2 + 1}{q} \text{ (mod $q$)}\\
\end{align*}
But $2 a_0 k$ is an integer, so the last congruence is equivalent to
\begin{align*}
k^2 \equiv -1 \text{ (mod $q$)},\;2 a_0 \equiv k^{-1}\left(  \frac{-1 -k^2}{q}  \right) \equiv k\left(  \frac{1 +k^2}{q}  \right)\text{ (mod $q$)}.
\end{align*}

Similarly, $I^{0,-}_{p/q} \bigcap \mathbb{Z} \neq \emptyset \Leftrightarrow \left\lfloor \frac{p^2}{q^2} \right\rfloor = \frac{p^2 - 1}{q^2}$ which is equivalent to
\begin{equation*}
    k^2 \equiv 1 \text{ (mod $q$)},\;2 a_0 \equiv k^{-1}\left(  \frac{1 -k^2}{q}  \right) \equiv k\left(  \frac{1 - k^2}{q}  \right)\text{ (mod $q$)}.
\end{equation*}

When $m$ is odd, the intervals become $I^{0,-}_{p/q} = (A^2,p^2/q^2)$ and $I^{0,+}_{p/q} = (p^2/q^2,B^2)$ and we get the same conclusions via the same computation. This proves (1) and (2).

Assume $m$ is even again. As for $I^{1}_{p/q}$,
\begin{multline*}
    |I^{1,+}_{p/q}| = (2A-1)^2 - \left( 2 \frac{p}{q} - 1 \right)^2 = \left( 2A + 2\frac{p}{q} -2 \right) \frac{3}{2 a_0 q^2}\\
     = \left( 2 a_0 - 1 + \frac{1}{\alpha_1} +\frac{1}{\beta_1} \right) \frac{3}{a_0 q^2} \\
     = \frac{6}{q^2} \cdot \left( 1 - \frac{1}{2 a_0} + \frac{\alpha_1 + \beta_1}{2 a_0 \alpha_1 \beta_1} \right)
\end{multline*}
and
\begin{multline*}
    |I^{1,-}_{p/q}| = \left( 2 \frac{p}{q} - 1 \right)^2 - (2B-1)^2 = \left( 2\frac{p}{q} +2B-2 \right) \frac{3}{2 a_0 q^2} \\
    = \frac{6}{q^2} \cdot \left( 1 - \frac{1}{2 a_0} + \frac{\alpha_1 + \gamma_1}{2 a_0 \alpha_1 \gamma_1} \right).
\end{multline*}
Here $\alpha_1,\beta_1,\gamma_1 \geq 1$ and $a_0 \geq 4$, so
\begin{equation*}
    \frac{7}{8} \leq 1 - \frac{1}{2 a_0} <  1 - \frac{1}{2 a_0} + \frac{\alpha_1 + \beta_1}{2 a_0 \alpha_1 \beta_1} \leq 1 + \frac{1}{2 a_0} \leq \frac{9}{8}
\end{equation*}
and hence $|I^{1,-}_{p/q}|,|I^{1,+}_{p/q}| \in \left[ \frac{21}{4 q^2}, \frac{27}{4 q^2} \right]$.
For $\theta,\eta \in \mathbb{R}$, $\theta \neq 0$, let $\theta  I^{1,+}_{p/q} + \eta = \{x \in \mathbb{R}\;|\; (x-\eta) / \theta \in I^{1,+}_{p/q} \}$. Denoting the set of integers congruent to 1 mod 4 by $4 \mathbb{Z} + 1$, we have
\begin{align*}
& I^{1,+}_{p/q} \bigcap (4 \mathbb{Z} + 1) \neq \emptyset\\
&\Leftrightarrow \left( \frac{1}{4}I^{1,+}_{p/q} - \frac{1}{4} \right) \bigcap \mathbb{Z} \neq \emptyset
\end{align*}
But $\frac{1}{4} I^{1,+}_{p/q}$ has length $t / q^2$ for some $t$ satisfying $1 < 21/16 \leq  t \leq 27/16 < 2$. Writing $\frac{1}{4} (2p/q-1)^2 - \frac{1}{4} = p^2/q^2 - p/q$, we thus have
\begin{align*}
& I^{1,+}_{p/q} \bigcap (4 \mathbb{Z} + 1) \neq \emptyset\\
&\Leftrightarrow \left\lceil \frac{1}{4} \left( 2\frac{p}{q} - 1\right)^2 - \frac{1}{4} \right\rceil = \frac{p^2 - pq +1}{q^2}\\
&\Leftrightarrow \left( a_0 + \frac{k}{q} \right)^2 - \left( a_0 + \frac{k}{q} \right) \equiv -\frac{1}{q^2} \text{ (mod 1)}\\
&\Leftrightarrow  2 a_0 k - k \equiv -\frac{k^2 + 1}{q} \text{ (mod $q$)}\\
&\Leftrightarrow  k^2 \equiv -1 \text{ (mod $q$)},\;2 a_0 \equiv 1 + k^{-1}\left( - \frac{1 + k^2}{q}  \right) \equiv 1 + k \left(  \frac{1 + k^2}{q}  \right)\text{ (mod $q$)}.
\end{align*}
Similarly we have
\begin{align*}
& I^{1,-}_{p/q} \bigcap (4 \mathbb{Z} + 1) \neq \emptyset\\
&\Leftrightarrow  k^2 \equiv 1 \text{ (mod $q$)},\;2 a_0 \equiv  1 + k \left(  \frac{1 - k^2}{q}  \right)\text{ (mod $q$)}.
\end{align*}

Odd $m$ gives exactly the same congruences as these, which proves (3) and (4).

Note that in each case from (1) to (4), $d$ should be the integer closest to $p^2/q^2$ or the integer $\equiv 1$ mod 4 closest to $(2 p/q - 1)^2$. Along the proof we already showed that this integer is $\frac{p^2+1}{q^2}$, $\frac{p^2-1}{q^2}$, $4 \frac{p^2 - pq + 1}{q^2} + 1$ and $4 \frac{p^2 - pq - 1}{q^2} + 1$ in each case, whence (\ref{eqn_d contained in the intervals}) follows.

Finally, when (1) or (2), writing $p = a_0 q + k$
\begin{equation*}
    N(p-q\sqrt{d}) = (a_0 q + k)^2 - q^2\left(a_0^2 + \frac{2k}{q}a_0 + \frac{k^2 \pm 1}{q^2}\right) = \mp 1.
\end{equation*}
Similarly when (3) or (4)
\begin{equation*}
    N\left(p-q\frac{1+\sqrt{d}}{2}\right) = p^2 - pq + \frac{q^2}{4} - \frac{q^2}{4}d = \mp 1.
\end{equation*}

 This completes the proof.
\end{proof}

\section{Dominance of the least elements}\label{sec_Dominance of the least elements}

Let
\begin{align*}
&\frak{I}^{o,+} = \{\;(y,x) \in \mathbb{Z}^2\;|\; 0 \leq x < y,\; x^2 \equiv 1\; \text{mod $y$, $y$ is odd} \},\\
&\frak{I}^{o,-} = \{\;(y,x) \in \mathbb{Z}^2\;|\; 0 \leq x < y,\; x^2 \equiv -1\; \text{mod $y$, $y$ is odd} \},\\
&\frak{I}^{e,+} = \{\;(y,x) \in \mathbb{Z}^2\;|\; 0 \leq x < y,\; x^2 \equiv 1\; \text{mod $y$, $y$ is even} \},\\
&\frak{I}^{e,-} = \{\;(y,x) \in \mathbb{Z}^2\;|\; 0 \leq x < y,\; x^2 \equiv -1\; \text{mod $y$, $y$ is even} \}
\end{align*}
and $\frak{I}$ the union of these four sets. Put
\begin{equation*}
\tilde{y} = \begin{cases}
\frac{y}{2}\;\; &\text{ if $y$ is even}\\
y\;\; &\text{ otherwise}.
\end{cases}
\end{equation*}

Assume $(y,x) \in \frak{I}^{e,-}$. If $\frac{1 + x^2}{y}$ is even (resp. odd) then there exists $a_0$ satisfying $2a_0 \equiv x \frac{1 + x^2}{y}$ mod $y$ (resp. $\equiv 1 + x \frac{1 + x^2}{y}$ mod $y$). In this case theorem~\ref{thm_Interval contains an integer iff} (1) (resp.(3)) gives an arithmetic progression of $a_0 \geq 5$ with common difference $y/2 = \tilde{y}$. When $(y,x) \in \frak{I}^{o,-}$, both of (1) and (3) give such arithmetic progressions with common difference $y = \tilde{y}$. Similar things can be said about $\frak{I}^{e,+}$, $\frak{I}^{o,+}$, and (\ref{eqn_d contained in the intervals}) gives a quadratic progression for each case.

In theorem~\ref{thm_Interval contains an integer iff} we assumed $p/q \geq 5$, but now we extend the arithmetic progressions to the range $a_0 > 0$. Observe that (\ref{eqn_norm of p minus q sqrt_d}) and (\ref{eqn_norm of p minus q omega_d}) are still valid for this range, once $d$ is determined by (\ref{eqn_d contained in the intervals}); therefore (\ref{eqn_d contained in the intervals}) gives quadratic progressions of positive integers for each pair $(y,x) \in \frak{I}$.

Theorem~\ref{thm_Halter_Koch} specified a quadratic progression for a symmetric sequence, so we can easily compare theorem~\ref{thm_Halter_Koch} and \ref{thm_Interval contains an integer iff} now. Let $\frak{D}^i(y,x)$ be the set of non-square integers $d$ given by (\ref{eqn_d contained in the intervals}) (case (1) or (2) for $i = 0$; case (3) or (4) for $i=1$) in which $a_0 > 0$ runs through corresponding arithmetic progressions mentioned above.

\begin{prop}\label{prop_y_x and symmetric sequence}
Let $y > 0$, $0 \leq x \leq y$, $(x,y) = 1$. Then the followings are equivalent:
\begin{enumerate}
  \item $x^2 \equiv -1$ mod $y$ (resp. $\equiv 1$ mod $y$)
  \item $\frac{x}{y} = [a_0,a_1,\cdots,a_n]$ for some symmetric sequence $\{a_1,\cdots,a_n\}$ of positive integers where $n$ is even (resp. $n$ is odd). Here, $n=0$ corresponds to the empty sequence.
\end{enumerate}
If this holds and $y$ is even, let $t = \frac{x^2 + 1}{y}$ (resp. $=\frac{x^2 - 1}{y}$). Then $t \equiv q_{n-1} r_{n-1}$ mod $2$.
\end{prop}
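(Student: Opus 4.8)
The plan is to reduce the whole statement to the determinant of the continuant matrix $M_n$. Writing $x/y = [0,a_1,\dots,a_n]$, the paper's conventions give $[0,a_1,\dots,a_n] = r_n/q_n$ with $M_n = \begin{pmatrix} q_n & q_{n-1} \\ r_n & r_{n-1}\end{pmatrix}$ and $\det M_n = (-1)^n$; as consecutive continuants are coprime this fraction is already reduced, so $x = r_n$ and $y = q_n$. The case $y=1$ is degenerate (empty sequence, everything trivial mod $1$), so I assume $y\ge 2$, whence $0<x<y$ and $a_0=0$. The single fact that drives both implications is: \emph{for a length-$n$ expansion $x/y=[0,a_1,\dots,a_n]$, the block $\{a_1,\dots,a_n\}$ is symmetric if and only if $x^2\equiv(-1)^{n+1}\pmod y$.} One direction is already supplied in the text (symmetry makes $M_n$ a symmetric matrix, i.e.\ $q_{n-1}=r_n=x$); reducing $\det M_n=q_nr_{n-1}-q_{n-1}r_n=(-1)^n$ modulo $y$ then gives $x^2=q_{n-1}x\equiv(-1)^{n+1}$. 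For the converse I use that $M_n^{T}$ is the product of the elementary factors in reverse order, hence the continuant matrix of the reversed block $\{a_n,\dots,a_1\}$; thus $q_{n-1}=r_n$ forces $M_n=M_n^{T}$, and fixed-length uniqueness of the factorization forces the sequence to equal its reverse.

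To pass between the congruence $x^2\equiv(-1)^{n+1}\pmod y$ and the exact equality $q_{n-1}=r_n$, I use the mod-$y$ reduction $q_{n-1}x\equiv(-1)^{n+1}\pmod y$ of the determinant: under the hypothesis $x^2\equiv(-1)^{n+1}$ this yields $x(q_{n-1}-x)\equiv 0\pmod y$, and since $\gcd(x,y)=1$ and $0\le q_{n-1}<q_n=y$ it upgrades to $q_{n-1}=x=r_n$. The equivalence then follows once the parity is matched. For (2)$\Rightarrow$(1), a symmetric expansion of even (resp.\ odd) length $n$ gives $x^2\equiv(-1)^{n+1}$, i.e.\ $\equiv-1$ (resp.\ $\equiv 1$). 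For (1)$\Rightarrow$(2), given $x^2\equiv-1$ (resp.\ $\equiv 1$) I invoke Proposition~\ref{prop_two ways of continued fractions} to present $x/y$ with an even (resp.\ odd) number of partial quotients $a_1,\dots,a_n$, and the core fact certifies that this particular expansion is symmetric.

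For the last assertion assume $y$ is even, so $\gcd(x,y)=1$ forces $x$ odd, and symmetry gives $q_{n-1}=r_n=x$ odd. Reading the determinant once more with $q_{n-1}=r_n=x$ yields $y\,r_{n-1}=x^2+(-1)^n$, which equals $x^2+1$ when $n$ is even and $x^2-1$ when $n$ is odd; in both cases this is exactly $yt$, so $t=r_{n-1}$. Since $q_{n-1}$ is odd, $q_{n-1}r_{n-1}\equiv r_{n-1}=t\pmod 2$, which is the claimed congruence.

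The main obstacle is the converse half of the core fact, namely rigorously turning a symmetric continuant matrix into a palindromic partial-quotient sequence (the transpose-is-reversal observation together with fixed-length uniqueness of the factorization), compounded by the parity bookkeeping: the two admissible expansions of $x/y$ differ in length by one, and one must consistently select the one whose length has the correct parity before applying the identity.
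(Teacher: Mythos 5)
Your proof is correct, and it takes a genuinely different route from the paper's. The paper never argues with the continuant matrix directly: for (1)$\Rightarrow$(2) it feeds $(y,x)\in\frak{I}$ into Theorem~\ref{thm_Interval contains an integer iff} to produce an actual non-square $d$ for which $|N(p-q\omega_d)|=1$, then invokes Remark~\ref{rem_period and symmetry} (itself resting on Proposition~\ref{prop_partial quotients of omega d} about periods of $\omega_d$) to get the symmetric block, reading the parity of $n$ from the sign of the norm via (\ref{eqn_nu_n}), (\ref{eqn_norm of p minus q sqrt_d}), (\ref{eqn_norm of p minus q omega_d}); for (2)$\Rightarrow$(1) it manufactures a large $d$ with the prescribed periodic expansion via Theorem~\ref{thm_Halter_Koch}, gets $\nu_n=1$ from Lemma~\ref{lem_quotient_norm}, and computes the norm; and the parity claim for $t$ comes from matching the nonemptiness criteria of Theorem~\ref{thm_Halter_Koch} against $\frak{D}^0(y,x)\neq\emptyset$. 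You replace this whole quadratic-field detour by a finite, self-contained computation: reducing $\det M_n=(-1)^n$ mod $y$ and using $\gcd(x,y)=1$ with $0\le q_{n-1}<q_n=y$ to upgrade $q_{n-1}\equiv x$ to $q_{n-1}=x=r_n$, then the transpose-is-reversal identity plus fixed-length uniqueness (Proposition~\ref{prop_two ways of continued fractions}) to force the palindrome --- the classical Serret-type criterion. Your route needs no existence of any $d$, and it proves strictly more at the end: the exact identity $y\,r_{n-1}=x^2+(-1)^n$, i.e.\ $t=r_{n-1}$, of which the paper's $t\equiv q_{n-1}r_{n-1}\pmod 2$ is the mod-$2$ shadow (as $q_{n-1}=x$ is odd for even $y$). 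What the paper's proof buys instead is a consistency check between its new interval machinery and the known quadratic progressions of Theorem~\ref{thm_Halter_Koch}, which is the thematic point of that section. One small bookkeeping note: your degenerate case should explicitly include $x=y=1$, where $1=[1]=[0,1]$ supplies both parities, as the paper observes in one line.
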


\begin{proof}
Since $1 = [1] = [0,1]$, the assertion is trivial for $x = y = 1$. Hence we assume $x \neq y$.

Assume $(y,x)\in \frak{I}$. By theorem~\ref{thm_Interval contains an integer iff}, $\frak{D}^i(y,x) \neq \emptyset$ for at least one $i$. Therefore by remark \ref{rem_period and symmetry}, $x/y = [0,a_1,\cdots,a_n]$ for some symmetric sequence $\{a_1,\cdots,a_n\}$. Here, by (\ref{eqn_nu_n}) and (\ref{eqn_norm of p minus q sqrt_d}), (\ref{eqn_norm of p minus q omega_d}), $n$ is even if and only if $x^2 \equiv -1$ mod $y$.

Conversely, assume $\frac{x}{y} = [0,a_1,\cdots,a_n]$ for some symmetric sequence $\{a_1,$ $\cdots,$ $a_n\}$. By theorem~\ref{thm_Halter_Koch} there exists a large $d$ such that
\begin{equation*}
    \sqrt{d} = [\lfloor \sqrt{d} \rfloor,\overline{ a_1,\cdots,a_n,2\lfloor \sqrt{d} \rfloor}]\quad  \text{or}\quad \omega_d = [\lfloor \omega_d \rfloor, \overline{a_1,\cdots,a_n,2\lfloor \omega_d \rfloor - 1}].
\end{equation*}
 For such $d$, $\nu_n = 1$ by lemma~\ref{lem_quotient_norm}. But $p_n/q_n = \lfloor \sqrt{d} \rfloor + x/y$ (or $=\lfloor \omega_d \rfloor + x/y$) here, and $\nu_n = (-1)^{n+1}(p_n^2 - q_n^2 d)$ (or $(-1)^{n+1}(p_n^2 - p_n q_n + q_n^2 \frac{1 - d}{4})$) by (\ref{eqn_nu_n}) and (\ref{eqn_xi_n}). Hence $x^2 \equiv (-1)^{n+1}$ mod $y$.

Now assume $y$ is even. Comparing to theorem~\ref{thm_Halter_Koch}, one easily has
\begin{gather*}
\text{$t$ is even} \;\Leftrightarrow\; \frak{D}^0(y,x) \neq \emptyset \;\Leftrightarrow\; \frak{D}(a_1,\cdots,a_n) \neq \emptyset \;\Leftrightarrow\; \text{$q_{n-1} r_{n-1}$ is even}
\end{gather*}
which completes the proof.
\end{proof}

Let $x/y = [0,a_1,\cdots,a_n]$ where $\{ a_1,\cdots,a_n \}$ is symmetric. For any $d \in \frak{D}^0(y,x)$ (resp. $d \in \frak{D}^1(y,x)$) we have
\begin{gather*}
\sqrt{d} = [\lfloor \sqrt{d} \rfloor, \overline{a_1,\cdots,a_n,2\lfloor \sqrt{d} \rfloor}]\quad  (\text{ resp. }\omega_d = [\lfloor \omega_d \rfloor, \overline{a_1,\cdots,a_n,2\lfloor \omega_d \rfloor - 1}])
\end{gather*}
and unless $2 \lfloor \sqrt{d} \rfloor$ (resp. $2\lfloor \omega_d \rfloor - 1$) appears in $\{a_1,a_2,\cdots,a_n\}$, the period $l(\sqrt{d})$ (resp. $l(\omega_d)$) is exactly $n+1$. Since the partial quotients of $\sqrt{d}$ (resp. $\omega_d$) cannot exceed $2 \lfloor \sqrt{d} \rfloor$ (resp. $2\lfloor \omega_d \rfloor - 1$), such exceptional case (i.e., $l(\sqrt{d})$ or $l(\omega_d) < n+1$) may possibly occur only when $d$ is the least element of $\frak{D}^i(y,x)$. Let $\overline{\frak{D}^i}(y,x)$ be the set $\frak{D}^i(y,x)$ where this possible exception is removed, i.e., with the least element discarded if its period is less than $n+1$. Then for each non-square integer $d$ (resp. non-square integer $d \equiv 1$ mod $4$), there is a unique $(y,x)\in\frak{I}$ such that $d \in \overline{\frak{D}^0}(y,x)$ (resp. $d \in \overline{\frak{D}^1}(y,x)$).

As mentioned in the introduction, when $d \in \overline{\frak{D}^0}(y,x)$ is not the least element of $\overline{\frak{D}^0}(y,x)$, some nice assertions like Ankeny-Artin-Chowla conjecture become true for $d$. This is because $\varepsilon_d$ is relatively small. By proposition~\ref{prop_partial quotients of omega d} the fundamental unit of $\mathbb{Z}[\sqrt{d}]$ is $\varepsilon_d = \lfloor \sqrt{d} \rfloor y + x + y \sqrt{d}$, and since $\frac{\lfloor \sqrt{d} \rfloor y + x}{y}$ is a convergent to $\sqrt{d}$, $\varepsilon_d = 2 y \sqrt{d} + O(1/y)$. But $d$ is not the smallest element in $\overline{\frak{D}^0}(y,x)$, so $\lfloor \sqrt{d} \rfloor = a_0 > \tilde{y}$ in (\ref{eqn_d contained in the intervals}) and hence $d > \tilde{y}^2$. In other words, $\varepsilon_d \ll d$ and by (\ref{eqn_Dirichlet class number formula}) $h_d \gg D^{1/2 - \epsilon}$.

We will call a non-square integer $d$ a \emph{least to $i$} if $d$ is the smallest element of $\overline{\frak{D}^i}(y,x)$ for some $(y,x)\in\frak{I}$. Let $\overline{\frak{D}^i}$ be the set of all non-square positive integers that are the leasts to $i$. It is a general belief that the class number is usually very small, say, $h_d \ll \log^2 D$ on average (for example, see conjecture 7 in \cite{Hooley_1}). Such a strong assertion is out of reach at this moment, but we can at least show that almost all non-square positive integers (resp. integers $\equiv 1$ mod $4$) are the leasts to $0$ (resp. leasts to $1$).

\begin{thm}\label{thm_dominance of least elements}
\begin{equation*}
    \sum_{d \in \overline{\frak{D}^0}} \frac{1}{d^s} \approx \zeta(s),\quad \sum_{d \in \overline{\frak{D}^1}} \frac{1}{d^s} \approx \sum_{\substack{d \equiv 1\text{ mod $4$}\\ d \text{ is non-square} }} \frac{1}{d^s} \qquad \text{ as $s \rightarrow 1+$}
\end{equation*}
\end{thm}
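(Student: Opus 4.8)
The plan is to reduce both displayed relations to a single sparseness estimate: the non-square integers that fail to be a \emph{least} are thin enough that their Dirichlet series converges at $s=1$. Since $\zeta(s)\to\infty$ and $\sum_{d\equiv 1\,(4),\,\text{non-square}}d^{-s}\sim\tfrac14\zeta(s)\to\infty$ as $s\to1^+$, while the two sides of each asserted relation differ only by the contribution of the exceptional $d$'s (together with the perfect squares, in the first relation), it suffices to show that these exceptional contributions remain bounded as $s\to1^+$. Concretely, writing $E^{i}(N)$ for the number of non-square $d\le N$ (with $d\equiv1$ mod $4$ when $i=1$) that lie in their class $\overline{\frak{D}^i}(y,x)$ but are not its minimum, I would first establish
\begin{equation*}
E^{i}(N)=O\!\left(\sqrt N\,(\log N)^2\right)
\end{equation*}
and then pass to the analytic statement by partial summation.

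The count is organized by the partition into classes. By the discussion preceding the theorem every such $d$ lies in a unique $\overline{\frak{D}^i}(y,x)$ with $(y,x)\in\frak{I}$, and the members of a fixed class are, via (\ref{eqn_d contained in the intervals}), a quadratic progression indexed by $a_0$ running through an arithmetic progression of common difference $\tilde y$; moreover $d=(a_0+x/y)^2\pm y^{-2}$ is strictly increasing in $a_0>0$, so $d\le N$ forces $a_0\le\sqrt N$. The key input is the fact already recorded in the text: if $d$ is not the least element of $\overline{\frak{D}^i}(y,x)$, then $a_0>\tilde y$. Hence the non-minimal members of a single class with $d\le N$ have $a_0\in(\tilde y,\sqrt N]$ in steps of $\tilde y$, giving at most $\sqrt N/\tilde y$ of them and none at all once $\tilde y>\sqrt N$. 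Writing $\rho(y)$ for the number of $x\in\{0,\dots,y-1\}$ with $x^2\equiv\pm1$ mod $y$ and using $\tilde y\ge y/2$, summation over classes yields
\begin{equation*}
E^{i}(N)\ \le\ \sqrt N\sum_{\substack{(y,x)\in\frak{I}\\ \tilde y\le\sqrt N}}\frac{1}{\tilde y}\ \ll\ \sqrt N\sum_{y\le 2\sqrt N}\frac{\rho(y)}{y}.
\end{equation*}

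Next I would dispose of the arithmetic sum. Writing $\tau(y)$ for the number of divisors of $y$, the number of square roots of $\pm1$ modulo $y$ satisfies $\rho(y)\ll\tau(y)$, and the classical estimate $\sum_{y\le Y}\tau(y)/y\ll(\log Y)^2$ gives $\sum_{y\le 2\sqrt N}\rho(y)/y\ll(\log N)^2$, hence $E^{i}(N)\ll\sqrt N\,(\log N)^2$. Finally, by Abel summation $\sum_{d\ \text{exceptional}}d^{-s}=s\int_1^\infty E^{i}(t)\,t^{-s-1}\,dt\ll s\int_1^\infty(\log t)^2\,t^{-s-1/2}\,dt$, which converges for every $s>\tfrac12$ and stays bounded as $s\to1^+$. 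In the first relation one must also add the perfect squares, contributing $\zeta(2s)\to\zeta(2)$; thus $\sum_{d\in\overline{\frak{D}^0}}d^{-s}=\zeta(s)+O(1)$, and the ratio to $\zeta(s)$ tends to $1$. The case $i=1$ is identical, comparing instead against $\sum_{d\equiv1\,(4)}d^{-s}\sim\tfrac14\zeta(s)$, the excluded squares again contributing only $O(1)$.

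I expect the main obstacle to be the bookkeeping in the per-class count rather than the analysis. One must verify that the single possibly-discarded smallest member (the period-dropping case used to define $\overline{\frak{D}^i}$ from $\frak{D}^i$) is handled correctly, so that the surviving non-minimal members genuinely satisfy $a_0>\tilde y$, and that each pair $(y,x)$ spawns only a bounded number of $a_0$-progressions (one for each admissible congruence $2a_0\equiv\cdots$ mod $y$ in Theorem~\ref{thm_Interval contains an integer iff}), so that the factor $\rho(y)$ in the display is the right one. Once these points are pinned down, the bound $E^{i}(N)\ll\sqrt N\,(\log N)^2$ — which in particular gives the density-zero statement promised in the introduction — follows, and the passage to the Dirichlet series is routine.
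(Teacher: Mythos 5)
Your proposal is correct, and it reaches the theorem by a genuinely different analytic route than the paper, even though both rest on the same arithmetic skeleton: the unique class membership $d\in\overline{\frak{D}^i}(y,x)$, the parametrization $d=\left(\frak{a}+x/y+\tilde{y}k\right)^2\pm y^{-2}$ with $a_0$ in an arithmetic progression of step $\tilde{y}$, the observation that non-least members force $a_0>\tilde{y}$ (hence $d>\tilde{y}^2$), and the divisor-type bound on the number of solutions of $x^2\equiv\pm1$ mod $y$. The paper never forms a counting function: it manipulates the Dirichlet series directly, writing $\zeta(s)-\zeta(2s)=\sum_{(y,x)}\sum_{d\in\overline{\frak{D}^0}(y,x)}d^{-s}$, splitting off the $k=0$ (least) terms, majorizing each class's tail by $\tilde{y}^{-2s}\left(\zeta(2s)-O(1)\right)$, and then killing the sum over classes with the identity $\sum_{n\ge1}2^{\omega(n)}n^{-2s}=\zeta(2s)^2/\zeta(4s)=O(1)$. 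You instead prove the explicit exceptional-set bound $E^i(N)\ll\sqrt{N}\,(\log N)^2$ — using $\rho(y)\ll 2^{\omega(y)}\le\tau(y)$ and the elementary estimate $\sum_{y\le Y}\tau(y)/y\ll(\log Y)^2$ in place of the zeta identity — and pass to the series by Abel summation. Your packaging buys strictly more: a quantitative density-zero statement with a rate, which immediately yields the paper's corollary on least-type fields, whereas the paper's route is slightly slicker (no partial summation, no counting) but only gives boundedness of the exceptional Dirichlet series. Your two flagged bookkeeping points are indeed the right ones to check, and both resolve favorably: for a fixed $i$ each $(y,x)$ spawns at most one residue class of $a_0$ modulo $\tilde{y}$ (the congruence $2a_0\equiv c$ has a unique solution mod $y=\tilde{y}$ for odd $y$, and mod $y/2=\tilde{y}$ for even $y$ when solvable), and discarding the period-dropping least element only shifts the first surviving $a_0$ upward, so non-minimal members still satisfy $a_0>\tilde{y}$; moreover a discarded element is recounted in its true (unique) class, so your partition-based count misses nothing.
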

(where `$\approx$' means the difference is bounded.)

\begin{proof}
We assume $y > 0$. Let
\begin{align*}
    &V(y) = \{ x \mid 0 \leq x < y,\; x^2 \equiv \pm 1\text{ (mod $y$)}  \},\\
    &I_1 = \{(y,x) \in \frak{I} \;|\; \frak{D}^0(y,x) \neq \emptyset,\; \overline{\frak{D}^0}(y,x) = \frak{D}^0(y,x) \},\\
    &I_2 = \{(y,x) \in \frak{I} \;|\; \frak{D}^0(y,x) \neq \emptyset,\; \overline{\frak{D}^0}(y,x) \neq \frak{D}^0(y,x) \}.
\end{align*}

For $(y,x) \in I_1 \bigcup I_2$, let $\frak{a} = \frak{a}(y,x)$ be the least positive integer $t$ satisfying $2t \equiv x\frac{1\pm x^2}{y}$ mod $y$ (where $x^2 \equiv \mp 1$ mod $y$). According to (\ref{eqn_d contained in the intervals}), write
\begin{align*}
    d = d(y,x;k) &= a_0^2 + \frac{2x}{y}a_0 + \frac{x^2\pm 1}{y^2}\\
&= (\frak{a} + \tilde{y} k)^2 + \frac{2x}{y}(\frak{a} + \tilde{y} k) + \frac{x^2\pm 1}{y^2}\\
&= \left( \frak{a} + \frac{x}{y} + \tilde{y} k \right)^2 \pm \frac{1}{y^2}.
\end{align*}

Assume $s>1$. We can easily compute the following sums:
\begin{align*}
    \zeta(s) - \zeta(2s) &= \sum_{y=1}^{\infty} \sum_{x \in V(y)} \sum_{d \in {\overline{\frak{D}^0}(y,x)}} \frac{1}{d^s}\\
&= \sum_{(y,x) \in I_1} \left(  \frac{1}{d(y,x;0)^s}+ \sum_{k=1}^{\infty} \frac{1}{\left( (\frak{a} + x/y + \tilde{y} k)^2 + O(1/y^2)  \right)^s} \right)\\
&\qquad +\sum_{(y,x) \in I_2}\sum_{k=1}^{\infty}\frac{1}{\left( (\frak{a} + x/y + \tilde{y} k)^2 + O(1/y^2)  \right)^s}\\
&= \sum_{(y,x) \in I_1} \left( \frac{1}{d(y,x;0)^s} + \frac{1}{\tilde{y}^{2s}}\left(\zeta(2s) - O(1)\right) \right)\\
&\qquad + \sum_{(y,x) \in I_2} \frac{1}{\tilde{y}^{2s}}\left(  \zeta(2s) - O(1)  \right).
\end{align*}

Now consider
\begin{align*}
    &V^+(y) = \{ x \mid 0 \leq x < y,\; x^2 \equiv 1\text{ (mod $y$)}  \},\\
    &V^-(y) = \{ x \mid 0 \leq x < y,\; x^2 \equiv -1\text{ (mod $y$)} \}.
\end{align*}

Using Chinese remainder theorem and the fact that the group of units modulo $p^n$ for an odd prime $p$ is cyclic, it is easy to see that  $x^2 \equiv 1$ (mod $y$) has $O(2^{\omega(y)})$ solutions where $\omega(y)$ is the number of distinct prime factors of $y$. The same is true for $x^2 \equiv -1$ (mod $y$) if $-1$ is a quadratic residue for every prime divisor of $y$; otherwise it has no solutions. The Euler product form of zeta function gives $\sum_{n=1}^{\infty} \frac{2^{\omega(n)}}{n^u} = \frac{\zeta(u)^2}{\zeta(2u)}$ for $u > 1$ (see theorem 301 of \cite{Hardy}), whence
\begin{align*}
\sum_{(y,x)\in I_1\bigcup I_2} \frac{1}{\tilde{y}^{2s}}(\zeta(2s)-O(1))
&\ll \sum_{y=1}^{\infty} \sum_{x \in V(y)} \frac{1}{y^{2s}}\\
&\ll \sum_{y=1}^{\infty} \sum_{x \in V^+(y)} \frac{1}{y^{2s}}\\
&\ll \sum_{y=1}^{\infty} \frac{2^{\omega(y)}}{y^{2s}} = \frac{\zeta(2s)^2}{\zeta(4s)} = O(1)
\end{align*}
and therefore
\begin{equation*}
    \sum_{(y,x)\in I_1} \frac{1}{d(y,x;0)^s} \approx \zeta(s)\qquad \text{as $s \rightarrow 1+$}.
\end{equation*}

Similarly, let
\begin{align*}
    &I'_1 = \{(y,x) \in \frak{I} \;|\; \frak{D}^1(y,x) \neq \emptyset,\; \overline{\frak{D}^1}(y,x) = \frak{D}^1(y,x) \},\\
    &I'_2 = \{(y,x) \in \frak{I} \;|\; \frak{D}^1(y,x) \neq \emptyset,\; \overline{\frak{D}^1}(y,x) \neq \frak{D}^1(y,x) \}
\end{align*}
and for $(y,x) \in I'_1 \bigcup I'_2$, let $\frak{a}'$ be the least positive integer $t$ satisfying $2t \equiv 1 + x\frac{1\pm x^2}{y}$ mod $y$ (where $x^2 \equiv \mp 1$ mod $y$). Write
\begin{align*}
    d' = d'(y,x;k) = \left( 2 \frak{a}' - 1 + \frac{2x}{y} + 2 \tilde{y} k \right)^2 \pm \frac{4}{y^2}.
\end{align*}

Like before,
\begin{align*}
&\sum_{\substack{d \equiv 1\text{ mod $4$}\\ d \text{ is non-square} }} \frac{1}{d^s}\\
&= \sum_{n=0}^{\infty} \frac{1}{(4n+1)^s} - \sum_{m=0}^{\infty} \frac{1}{(2m+1)^{2s}}\\
&= \sum_{y=1}^{\infty} \sum_{x \in V(y)} \sum_{d \in {\overline{\frak{D}^1}(y,x)}} \frac{1}{d^s}\\
&= \sum_{(y,x) \in I'_1} \left(  \frac{1}{d'(y,x;0)^s}+ \sum_{k=1}^{\infty} \frac{1}{\left( (2\frak{a}'-1 + 2x/y + 2 \tilde{y} k)^2 + O(1/y^2)  \right)^s} \right)\\
&\qquad \qquad +\sum_{(y,x) \in I'_2}\sum_{k=1}^{\infty}\frac{1}{\left( (2\frak{a}'-1 + 2x/y + 2\tilde{y} k)^2 + O(1/y^2)  \right)^s}\\
&= \sum_{(y,x) \in I'_1} \left( \frac{1}{d'(y,x;0)^s} + \frac{1}{(2\tilde{y})^{2s}}\left(\zeta(2s) - O(1)\right) \right)\\
&\qquad \qquad + \sum_{(y,x) \in I'_2} \frac{1}{(2\tilde{y})^{2s}}\left(  \zeta(2s) - O(1)  \right)
\end{align*}
and again
\begin{equation*}
    \sum_{\substack{d \equiv 1\text{ mod $4$}\\ d \text{ is non-square} }} \frac{1}{d^s} \approx \sum_{(y,x) \in I'_1} \frac{1}{d'(y,x;0)^s}\qquad \text{as $s \rightarrow 1+$}.
\end{equation*}
\end{proof}

We can apply this to real quadratic fields as follows. Assume $d$ is square-free. We say that $\mathbb{Q}(\sqrt{d})$ is of \emph{the least type} if either $d \equiv 2,3$ mod $4$ and $d \in \overline{\frak{D}^0}$, or $d \equiv 1$ mod $4$ and $d \in \overline{\frak{D}^1}$.
\begin{cor}
Let $S(X)$ be the set of square-free integers between $1$ and $X$, and $S(X;c,k)$ as in theorem~\ref{thm_square-free_congruence}. Then
\begin{align*}
    \frac{| S(X) \bigcap \overline{\frak{D}^0}|}{|S(X)|} \sim  \frac{|S(X) \bigcap \overline{\frak{D}^1}|}{S(X;1,4)} \sim 1\qquad (X \rightarrow \infty).
\end{align*}
In particular, almost all real quadratic number fields are of the least type.
\end{cor}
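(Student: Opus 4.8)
The plan is to deduce the two counting asymptotics from Theorem~\ref{thm_dominance of least elements} by a soft Tauberian argument: a set of positive integers whose reciprocals sum to a finite value has natural density zero, so it suffices to show that the non-square integers failing to lie in $\overline{\frak{D}^0}$ (resp. the $d\equiv 1 \bmod 4$ failing to lie in $\overline{\frak{D}^1}$) form such a set. Write
\begin{align*}
E_0 &= \{\, d > 1 : d \text{ non-square},\ d \notin \overline{\frak{D}^0} \,\},\\
E_1 &= \{\, d > 1 : d \equiv 1 \bmod 4,\ d \text{ non-square},\ d \notin \overline{\frak{D}^1} \,\}.
\end{align*}
Since the sets $\overline{\frak{D}^0}(y,x)$ partition the non-squares, $\overline{\frak{D}^0}$ consists of non-squares and $E_0$ is exactly its complement among them; likewise for $E_1$. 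The full non-square Dirichlet series is $\sum_{d \text{ non-square}} d^{-s} = \zeta(s) - \zeta(2s)$, and $\zeta(2s)$ stays bounded as $s \to 1+$. Subtracting the conclusion $\sum_{d \in \overline{\frak{D}^0}} d^{-s} = \zeta(s) + O(1)$ of Theorem~\ref{thm_dominance of least elements}, the two singular $\zeta(s)$ terms cancel and leave
\[
\sum_{d \in E_0} \frac{1}{d^s} = \big( \zeta(s) - \zeta(2s) \big) - \sum_{d \in \overline{\frak{D}^0}} \frac{1}{d^s} = O(1) \qquad (s \to 1+).
\]
Because every term is positive, monotone convergence as $s \downarrow 1$ upgrades this uniform bound to the convergence $\sum_{d \in E_0} 1/d < \infty$; the identical computation against the second half of Theorem~\ref{thm_dominance of least elements} gives $\sum_{d \in E_1} 1/d < \infty$.

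Next I would record the elementary lemma that $\sum_{n \in E} 1/n < \infty$ forces $E$ to have natural density zero: given $\epsilon > 0$, choose $M$ with $\sum_{n \in E,\ n > M} 1/n < \epsilon$; then for $N > M$ the trivial bound $1 \leq N/n$ for $n \leq N$ yields $\#\{n \in E : M < n \leq N\} \leq N \sum_{n \in E,\ n > M} 1/n < \epsilon N$, so $\limsup_N \#(E \cap [1,N])/N \leq \epsilon$. Applying this to $E_0$ and $E_1$ gives $|E_0 \cap [1,X]| = o(X)$ and $|E_1 \cap [1,X]| = o(X)$. Every square-free integer exceeding $1$ is non-square, so $S(X) \setminus \overline{\frak{D}^0} \subseteq E_0 \cup \{1\}$ and $\{ d \in S(X) : d \equiv 1 \bmod 4 \} \setminus \overline{\frak{D}^1} \subseteq E_1$; hence both exceptional counts are $o(X)$. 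Dividing by $|S(X)| = \tfrac{6}{\pi^2} X + O(\sqrt{X})$ and by $S(X;1,4) \sim \tfrac{2}{\pi^2} X$ (Theorem~\ref{thm_square-free_congruence} with $k = 4$) yields the two displayed limits. For the closing sentence I would note that $\mathbb{Q}(\sqrt{d})$ is of the least type precisely when $d \equiv 2,3 \bmod 4$ with $d \in \overline{\frak{D}^0}$, or $d \equiv 1 \bmod 4$ with $d \in \overline{\frak{D}^1}$; the square-free $d \leq X$ failing this lie in $(E_0 \cup E_1) \cap [1,X]$, a set of size $o(X)$, so almost all real quadratic fields are of the least type.

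The bulk of this is routine; the one step that deserves care — and the point I expect to be the crux — is the passage from the analytic statement of Theorem~\ref{thm_dominance of least elements} (a bound on a difference of Dirichlet series near $s=1$) to the arithmetic statement about natural density. In general no Tauberian theorem lets one read off a natural density from the behavior of a Dirichlet series at its abscissa of convergence, but here we only need density \emph{zero}, which is genuinely elementary: the cancellation of the $\zeta(s)$ singularities leaves a sum of positive terms that is bounded near $s=1$, and by monotone convergence this is exactly the statement $\sum 1/d < \infty$, from which density zero follows by the lemma above. The only thing I would check carefully is that the implied $O(1)$ in Theorem~\ref{thm_dominance of least elements} is uniform on a right-neighborhood of $s=1$, rather than merely a limiting bound at the single point $s=1$; this holds by the construction in that theorem's proof, where the error is controlled by the convergent series $\sum_{y} 2^{\omega(y)} y^{-2s}$.
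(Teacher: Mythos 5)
Your proposal is correct and follows the same route as the paper: the paper's proof is a two-sentence sketch that cites Theorem~\ref{thm_dominance of least elements} for ``the ratio of non-square integers that are not the leasts to $0$ or $1$ is asymptotically zero'' and Theorem~\ref{thm_square-free_congruence} for the positive density of the square-free residue classes, exactly the two ingredients you use. Your monotone-convergence passage from the uniform $O(1)$ bound on $\sum_{d \in E_i} d^{-s}$ near $s=1$ to $\sum_{d\in E_i} 1/d < \infty$, and the elementary tail-sum lemma giving density zero, are precisely the justification the paper leaves implicit, and your flagged check that the bound holds on a right-neighborhood of $s=1$ (via the convergent majorant $\sum_y 2^{\omega(y)} y^{-2s}$ in that theorem's proof) is the right point to verify.
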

\begin{proof}
By theorem~\ref{thm_dominance of least elements}, the ratio of non-square integers that are not the leasts to $0$ or $1$ is asymptotically zero. Since the square-free integers congruent to $1,2$ and $3$ mod $4$ constitute positive density sets by theorem~\ref{thm_square-free_congruence}, the corollary follows.
\end{proof}

\bibliographystyle{spmpsci}      
\bibliography{NRQFL_Ramanujan}   

\begin{thebibliography}{10}
\providecommand{\url}[1]{{#1}}
\providecommand{\urlprefix}{URL }
\expandafter\ifx\csname urlstyle\endcsname\relax
  \providecommand{\doi}[1]{DOI~\discretionary{}{}{}#1}\else
  \providecommand{\doi}{DOI~\discretionary{}{}{}\begingroup
  \urlstyle{rm}\Url}\fi

\bibitem{AnkenyArtinChowla}
Ankeny, N., Artin, E., Chowla, S.: The class-number of real quadratic number
  fields.
\newblock Ann. of Math.(2) \textbf{56}(3), 479--493 (1952)

\bibitem{Chao_Hua}
Chao-Hua, J.: The distribution of square-free numbers.
\newblock Sci. China Ser. A \textbf{36}(2), 154--169 (1993)

\bibitem{Friesen}
Friesen, C.: On continued fractions of given period.
\newblock In: Proc. Amer. Math. Soc, vol. 103, pp. 9--14 (1988)

\bibitem{Halter_Koch}
Halter-Koch, F.: Continued fractions of given symmetric period.
\newblock Fibonacci Quart. (29), 298--303 (1991)

\bibitem{Hardy}
Hardy, G., Wright, E.: An introduction to the theory of numbers, 5th edn.
\newblock Clarendon Press. Oxford (1979)

\bibitem{Hashimoto2001143}
Hashimoto, R.: Ankeny-\text{A}rtin-\text{C}howla conjecture and continued
  fraction expansion.
\newblock Journal of Number Theory \textbf{90}(1), 143 -- 153 (2001)

\bibitem{Hooley_1}
Hooley, C.: On the \text{P}ellian equation and the class number of indefinite
  binary quadratic forms.
\newblock J. Reine Angew. Math. \textbf{353}(2), 98--131 (1984)

\bibitem{Kawamoto}
Kawamoto, F., Tomita, K.: Continued fractions and certain real quadratic fields
  of minimal type.
\newblock J. Math. Soc. Japan \textbf{60}(3), 865--903 (2008)

\bibitem{Li}
Li, X.: Upper bounds on {L}-functions at the edge of the critical strip
  \textbf{4}, 727--755 (2010)

\bibitem{niven1991introduction}
Niven, I., Zuckerman, H., Montgomery, H.: An introduction to the theory of
  numbers, 5th edn.
\newblock Wiley (1991)

\bibitem{Prachar}
Prachar, K.: \"{U}ber die kleinste quadratfreie zahl einer arithmetischen
  reihe.
\newblock Monatsh.Math. \textbf{62}, 173--176 (1958)

\end{thebibliography}

\end{document}